\numberwithin{equation}{section}
\newcommand{\IR}{\mathbbm{R}}
\newcommand{\bigo}{\mathrm{O}}
\newcommand{\lito}{\mathrm{o}}
\newcommand{\toinf}{\to\infty}
\newcommand{\eps}{\varepsilon}
\renewcommand{\phi}{\varphi}
\newcommand{\I}{\mathrm{I}}
\newcommand{\ahalf}{{\textstyle\frac{1}{2}}}
\newcommand{\athird}{{\textstyle\frac{1}{3}}}
\newcommand{\asixth}{{\textstyle\frac{1}{6}}}
\newcommand{\anth}{{\textstyle\frac{1}{n}}}
\newcommand{\eq}{\eqref}
\newcommand{\IE}{\mathbbm{E}}
\newcommand{\IP}{\mathbbm{P}}
\newcommand{\Var}{\mathop{\mathrm{Var}}\nolimits}
\newcommand{\Cov}{\mathop{\mathrm{Cov}}}
\newcommand{\Bi}{\mathop{\mathrm{Bi}}}
\newcommand{\MVN}{\mathrm{MVN}}
\def\be#1\ee{\begin{equation*}#1\end{equation*}}
\def\ben#1\ee{\begin{equation}#1\end{equation}}
\def\bes#1\ee{\begin{equation*}\begin{split}#1\end{split}\end{equation*}}
\def\besn#1\ee{\begin{equation}\begin{split}#1\end{split}\end{equation}}
\def\bg#1\ee{\begin{gather*}#1\end{gather*}}
\def\bgn#1\ee{\begin{gather}#1\end{gather}}
\def\bm#1\ee{\begin{multline*}#1\end{multline*}}
\def\bmn#1\ee{\begin{multline}#1\end{multline}}
\def\ba#1\ee{\begin{align*}#1\end{align*}}
\def\ban#1\ee{\begin{align}#1\end{align}}
\def\klr#1{(#1)}
\def\bklr#1{\bigl(#1\bigr)}
\def\bbbklr#1{\biggl(#1\biggr)}
\def\bkle#1{\bigl[#1\bigr]}
\def\bbbkle#1{\biggl[#1\biggr]}
\def\klg#1{\{#1\}}
\def\bklg#1{\bigl\{#1\bigr\}}
\def\bbbklg#1{\biggl\{#1\biggr\}}
\def\bklgl{\bigl\{}
\def\bklgr{\bigr\}}
\def\norm#1{\Vert#1\Vert}
\def\tnsk{\kern-0.1ex}
\def\VERT{{|\tnsk{\kern-0.2ex}|\tnsk{\kern-0.2ex}|}}
\def\bVERT{{\big|\tnsk{\kern-0.2ex}\big|\tnsk{\kern-0.2ex}\big|}}
\def\bbVERT{{\Big|\tnsk{\kern-0.2ex}\Big|\tnsk{\kern-0.2ex}\Big|}}
\def\bbbVERT{{\bigg|\tnsk{\kern-0.2ex}\bigg|\tnsk{\kern-0.2ex}\bigg|}}
\def\bbbVERT{{\Bigg|\tnsk{\kern-0.2ex}\Bigg|\tnsk{\kern-0.2ex}\Bigg|}}
\def\tnorm#1{\VERT#1\VERT}
\def\abs#1{\vert#1\vert}
\def\babs#1{\bigl\vert#1\bigr\vert}
\def\bbabs#1{\Bigl\vert#1\Bigr\vert}
\def\bbbabs#1{\biggl\vert#1\biggr\vert}
\def\bmid{\bigm\vert}
\def\%#1{\mathcal{#1}}
\def\^#1{\ifmmode{\mathaccent"705E #1}\else{\accent94 #1}\fi}
\def\~#1{\ifmmode{\mathaccent"707E #1}\else{\accent"7E #1}\fi}
\edef\-#1{\noexpand\ifmmode {\noexpand\bar{#1}} \noexpand\else%
\-#1\noexpand\fi}
\def\wt#1{\widetilde{#1}}
\def\leq{\leqslant}
\def\geq{\geqslant}
\theoremstyle{plain}
\newtheorem{theorem}{Theorem}[section]
\newtheorem{lemma}[theorem]{Lemma}
\newtheorem{corollary}[theorem]{Corollary}
\theoremstyle{definition}
\newtheorem{definition}{Definition}[section]
\newtheorem{remark}[definition]{Remark}
\renewcommand\section{\@startsection {section}{1}{\z@}%
{-3.5ex \@plus -1ex \@minus -.2ex}%
{1.3ex \@plus.2ex}%
{\center\small\sc\mathversion{bold}\MakeUppercase}}
\def\subsection#1{\@startsection {subsection}{2}{0pt}%
{-3.5ex \@plus -1ex \@minus -.2ex}%
{1ex \@plus.2ex}%
{\bf\mathversion{bold}}{#1}}
\def\subsubsection#1{\@startsection{subsubsection}{3}{0pt}%
{\medskipamount}%
{-10pt}%
{\normalsize\itshape}{\kern-2.2ex. #1.}}
\def\cite{\citet}
\begin{document}

\title{\sc\bf\large\MakeUppercase{Stein's method in high~dimensions 
with~applications}}
\author{\sc Adrian R\"ollin}
\date{\it National University of Singapore}

\maketitle

\begin{abstract} 
Let $h$ be a three times partially differentiable function on $\IR^n$, let
$X=(X_1,\dots,X_n)$ be a collection of real-valued random variables and let
$Z=(Z_1,\dots,Z_n)$ be a multivariate Gaussian vector. In this article, we
develop Stein's method to give error bounds on the difference $\IE h(X) - \IE
h(Z)$ in cases where the coordinates of $X$ are not necessarily independent,
focusing on the high dimensional case $n\toinf$. In order to express the
dependency structure we use Stein couplings, which allows for a broad range of
applications, such as classic occupancy, local dependence, Curie-Weiss model
etc. We will also give applications to the Sherrington-Kirkpatrick model and
last passage percolation on thin rectangles.\\

Soi $h$ une fonction r\'eelles sur $\IR^n$ dont les d\'eriv\'es partielles
d'ordre trois existent, soi $X=(X_1,\dots,X_n)$ un vecteur des variables
al\'eatoire r\'eelles et soi $Z=(Z_1,\dots,Z_n)$ un vecteur des  variables
al\'eatoire r\'eelles suivant la loi gaussienne. Dans cet article, on \'etablit
la m\'ethode de Stein pour obtenir une majoration de la difference $\IE
h(X) - \IE h(Z)$ au cas o\`u les coordinates de $X$ ne sont pas
n\'ecessairement independentes; nous concentrons sur le cas de grande dimension
$n\toinf$. Pour exprimer la structure de dependence, on fait usage des couplages
de Stein, ce qui permet une large gamme d'application, par exemple au mod\`eles
des urnes, dependence locale, le mod\`ele de Curie-Weiss etc. Nous discutons
aussi bien des applications au mod\`ele de Sherrington-Kirkpatrick et ultime
passage de percolation en rectangles \'etroits.

\end{abstract}

\bigskip

\hfil\vbox{\hsize=0.87\hsize\noindent\textit{Keywords:} 
Stein's method;  Gaussian interpolation; last passage
percolation on thin rectangles; Sherrington-Kirkpatrick model;  Curie-Weiss
model}\hfil\hfil


\section{Introduction}

Let $X$ and $Z$ be random vectors in $\IR^n$ and let $h:\IR^n\to\IR$ be a
function of interest. A fundamental problem in probability theory is to obtain
bounds on the quantity
\ben								\label{1}
  \abs{\IE h(X) - \IE h(Z)},
\ee
that is, to estimate the error when we replace $X$ in $\IE h(X)$ by $Z$. If the
error in \eq{1} is small irrespective of the detailed properties of $X$ and~$Z$
then we will attribute to the function $h$ a certain degree of
\emph{universality}, which means that the expected value only depends on certain
basic characteristics of $X$ and $Z$, such as the first few moments. 

Of particular interest is the case where $Z$ is a Gaussian vector having the
same (or a similar) covariance structure as $X$, and probably the most prominent
occurrence of such universality is the central limit theorem. If $X$ is a random
vector, such that the $X_i$ are independent of each other, centred and scaled
such that $\sum_i \Var X_i = 1$, and $Z$ is a centred Gaussian vector with
uncorrelated coordinates having the same variances as those of $X$, then it is
well known that \eq{1} is small for functions of the form
\ben								\label{2}
  h(x) = g\bbbklr{\sum_{i=1}^n x_i},
\ee
where $g:\IR\to\IR$ is not too irregular. A common heuristic says that the
central limit theorem will also hold if independence is replaced by some form of
``weak'' dependence, and, furthermore, it can be expected that in many cases
\eq{1} will be small for more general functions than \eq{2}. Thus, in terms of
dropping independence and considering more general functions than \eq{2},
universality often can be observed beyond the standard setting of the central
limit theorem.

Even if the vector $X$ is such that $\sum_i X_i$ does not satisfy the central
limit theorem, we can consider \eq{1} for functions more general than~\eq{2}.
Let, for example, $\xi_i$ be the number of balls that end up in the $i$th urn,
when a fixed number of balls $m$ is distributed independently among $n$ urns.
Clearly, $\sum_i \xi_i = m$, respectively, $\sum_i X_i = 0$ if the $X_i$ are
the centered and properly scaled $\xi_i$. Although these sums do not satisfy a
central limit theorem, it is nevertheless possible to give
informative bounds on \eq{1}. \cite{Dembo1996} and \cite{Chen2010b} considered,
for example, functions of the form $h(x) = g(\sum_{i} \phi(x_i))$ for fixed
functions $g$ and $\phi$, where $\phi$ is non-linear. For other, non-trivial
choices of $h$ we refer to Sections~\ref{sec6} and~\ref{sec7}.

Over the last decades, Stein's method has proved to be a very robust method to
obtain explicit bounds for univariate and multivariate distributional
approximations in cases where $X$ exhibits non-trivial dependencies which are
not of martingale type, but more combinatorial in flavour. Although Stein's
method for the multivariate normal distribution has been successfully
implemented in many places (see \cite{Meckes2009} and \cite{Reinert2009} and
references therein), the
dependence on the dimension of the results obtained so far may give the
impression that the method is not suitable if the dimension grows linearly with
the size of the problem. Indeed, this high-dimensional case has remained
untackled until now. The purpose of this article is to close this gap.

It is important to note at this point that the type of bounds that we will
obtain will generally not imply that the marginal distributions of the
individual coordinates will converge to a normal distribution. That is, the aim
is not to prove convergence to a multivariate normal distribution. In the
already mentioned example of classic occupancy, if the number of balls and urns
are of the same order, then $\xi_i$ will converge to a Poisson distribution with
mean being equal to the limiting ratio $\lim m/n$. Bounds on \eq{1} will only be
informative if they are smaller than the fluctuation of $h(X)$, that is, if the
bounds are smaller than $\IE\abs{h(X)}$ (assuming here without loss of
generality that $\IE h(Z) = 0$), which is an obvious upper bound on~\eq{1}. The
bounds that we obtain for functions $h$ that concentrate only on a few
coordinates will typically have the same order as $\IE\abs{h(X)}$ and hence will
not---and often cannot---be informative.

The remainder of the article is organised as follows. In the next section we
will first discuss the key tools used in this article, in particular the
fundamental idea of using interpolation to estimate \eq{1}, the Gaussian
integration by parts formula and multivariate Stein couplings, leading to our
main result, Lemma~\ref{lem1}. In Section~\ref{sec2} we will then give some
abstract and more concrete examples of Stein couplings, ranging from the
independent case to more sophisticated dependencies. In Section~\ref{sec5} we
will discuss various applications.

\section{The key lemma}\label{sec1}

An old idea to compare two quantities of interest is to find an interpolating
sequence between them and to estimate the error ``along the way'' of the
interpolation using the derivatives of~$h$ (paraphrasing \cite{Talagrand2010} on
``Gaussian interpolation and the smart path method''). One of the earliest
encounters of this idea is Lindeberg's method of telescoping sums. Define the
interpolating sequence
\ben								\label{3}
  Y(i) = (X_1,\dots,X_i,Z_{i+1},\dots,Z_n),
\ee
and write
\ben								\label{4}
  \IE h(X) - \IE h(Z) = \sum_{i=1}^n
	\IE\bklg{h\bklr{Y(i)}-h\bklr{Y(i-1)}};
\ee
one can now bound the right hand side of \eq{4} using Taylor expansion; this
idea has been successfully implemented by \cite{Rotar1973},
\cite{Chatterjee2007a}, \cite{Mossel2010} and \cite{Tao2010b} and surely by
other authors. One of the important consequences of this approach is apparent
when we look at \eq{3}: it forces us to treat the coordinates of $X$ in an
ordered way. If the components of $X$ are independent or, more generally, a
martingale difference sequence, then this is of course desirable, and, indeed,
quite a few central limit theorems for martingales are based upon \eq{4} (see
e.g.\ \cite{Bolthausen1982a}, \cite{Grama1997} and \cite{Rinott1999}). And even
if no such structure is apparent in the problem, one can sometimes arrange $X$
such that it will be close enough to a martingale difference sequence. 

This approach, however, is not entirely satisfying. Often the martingale
structure is ``artificial'' and one would like to make use of a more natural
dependence structure in~$X$, instead (rates of convergences being another reason
to avoid martingales). And in some cases, one may have difficulties to linearise
the problem at all.

A key difference in Stein's method is to chose an interpolating sequence that,
in contrast to Lindeberg's telescoping sum, treats the components of $X$
\emph{symmetrically}. Note that \eq{3} essentially interpolates ``along the
coordinate axes'' and the order of the axes determines the linearisation of the
problem. Instead, we will interpolate between $X$ and $Z$ in a way that will
linearly interpolate between the matrices $XX^t$ and~$ZZ^t$. This approach is
well-known as \emph{Gaussian interpolation} and independently developed by
\cite{Slepian1962} and \cite{Stein1972}, although the technique used by Stein
looks very much different from what is commonly referred to as Gaussian
interpolation (the interpolation is ``hidden'' in the solution to the so-called
\emph{Stein equation}). 

Gaussian interpolation has become popular in many areas; \cite{Talagrand2010}
gives a good account of the key idea, in particular in the context of statistical 
mechanics (where Gaussian interpolation is referred to as \emph{smart path method}). 
The method is a key ingredient in the rigorous proof of the \emph{Parisi formula} by \cite{Talagrand2006}. 
It is also an important tool to prove universality in the bulk of eigenvalues for Wigner 
random matrices with matrix entries following so-called \emph{Gaussian divisible distributions}. 
 The generalisation from these special distributions to the general case, however, 
uses Lindeberg's idea of telescoping sums; see
\cite{Johansson2001} and \cite{Erdos2010}.

Now, assume that $X$ and $Z$ are independent and define the interpolating
sequence $Y_t=\sqrt{t}X+\sqrt{1-t}Z$, $0\leq t\leq1$. Note that, if $\IE X=\IE
Z=0$, then $\IE Y_t = 0$ and, if $\Cov(X)=\Cov(Z)$, then $\Cov(Y_t) = \Cov(X)$
for all $t$ (which may serve as an explanation why this particular $Y_t$ is a
good choice). With $h_i$ being the partial derivative in the $i$th coordinate,
we can write
\besn							\label{5}
  \IE h(X) - \IE h(Z) & = \int_0^1 \frac{\partial}{\partial t}\IE
h(Y_t)dt\\
    & =\frac12 \int_0^1 \IE\bbbklg{
    \frac{1}{\sqrt{t}}\sum_i X_i h_i\klr{Y_t}
    -\frac{1}{\sqrt{1-t}}\sum_i Z_i h_i\klr{Y_t}
    }dt
\ee
(differentiation in \eq{5} corresponds to taking differences in \eq{4} and
integration replaces summation, but this is only a technical difference). One
can easily see that, on the right hand side of \eq{5}, the coordinates are
treated symmetrically. The result obtained by \cite{Slepian1962} (known as
\emph{Slepian's Lemma}) is valid under the assumption that $X$ and $Z$ are
centred Gaussian vectors
having a different covariance structure. In this case, the Gaussian integration
by parts formula 
\ben							\label{6}
    \IE\bklg{Z_i h_i(Z)} = \sum_{j=1}^n \Cov(Z_i,Z_j)\IE h_{ij}(Z)
\ee
can be used to estimate the error on the right hand side of \eq{5} in terms of
the covariances. \cite{Stein1972}, on the other hand, considered the univariate
case, but where $X$ is not Gaussian. Although \eq{6} can still be used for $Z$,
it needs to be replaced by an approximate version of \eq{6} for $X$. 

In order to formalise this approximate version of the Gaussian integration by
parts formula, we will make use of a multivariate generalisation of \emph{Stein
couplings}, which were introduced by \cite{Chen2010b} in the univariate case,
and then give more concrete constructions later on. Throughout this article
summations will always range from $1$ to $n$ unless otherwise stated.

\begin{definition} Let $(X,X',G)$ be a triple of $n$-dimensional random vectors.
We say that the triple is a Stein coupling if, for any smooth enough function
$f:\IR^n \to \IR$, we have
\ben							\label{7}
   \IE \sum_i X_i f_i(X) = \IE\sum_{i}G_i \bklr{f_i(X') -  f_i(X)}
\ee
whenever the involved expectations exist.
\end{definition}

\begin{remark} If $(X,X',G)$ is a Stein coupling, it follows from the definition
that
\ben							\label{8}
  \IE X_i = 0,\qquad \IE(G_iD_j+G_jD_i) = 2\Cov(X_i,X_j),
\ee
for all $i$ and $j$, where we let $D=X'-X$ throughout this article (apply \eq{7} to the functions $f(x)=x_i$ and $f(x)=x_ix_j$, respectively). If \eq{7}
is replaced by the stronger condition that
\ben                                                    \label{9}
   \IE \bklg{X_i f_i(X)} = \IE \bklg{G_i \bklr{f_i(X') -  f_i(X)}}
\ee
for all $i$, then 
\ben                                                    \label{9b}
  \qquad \IE(G_iD_j) = \IE(G_jD_i )= \Cov(X_i,X_j),
\ee
for all $i$ and $j$.
\end{remark}

Equation \eq{7} is the key condition to obtain an approximate Gaussian
integration by parts formula: if $X$ and $X'$ are close to each other, then the
difference on the right hand side of \eq{7} can be approximated by the
corresponding derivatives, leading to a formula similar to~\eq{6}. Hence, it is
crucial that $X'$ is only a \emph{small perturbation} of~$X$.

The following result, although not difficult to prove, is crucial for our
approach. On one hand, it measures how closely $X$ satisfies the Gaussian
integration by parts formula and, on the other hand, also compares the
covariances of $X$ and~$Z$ (which in this article we will mostly assume to be
the same). To make things more transparent, we keep everything explicit in terms
of the function~$h$, instead of using the usual approach via Stein equation and
its solution.

Unless otherwise stated, we will assume throughout this article that
\ben								\label{10}
  \IE X = 0,
  \qquad
  \Var X_i=\sigma^2_i, 
  \qquad
  \IE\abs{X_i}^3=\tau^3_i < \infty,
  \qquad 
  \-\tau = \sup_{i}\tau_i.
\ee
We will denote by $\Sigma= (\sigma_{ij})_{1\leq i,j\leq n}$ the covariance
matrix of $X$, where $\sigma_{ij} = \IE(X_iX_j)$, and we have $\sigma_{ii} =
\sigma_i^2$.

\begin{lemma}\label{lem1} Let $(X,X',G)$ be a Stein coupling. Let $X''$ and
$\~D$ be $n$ dimensional random vectors and let $S$ be a random $n\times n$
matrix. Define $D=X'-X$ and $D' = X''-X$. Assume that, for all $k$ and $l$,
\ben								\label{11}
  \IE(G_k D_l|X) = \IE(G_k \~D_l|X),
    \qquad \IE( S_{kl}|X) = \sigma_{kl}.
\ee
Let $Z\sim\MVN_n(0,\Sigma)$ be independent of the previous random vectors. Then,
for any three times partially differentiable function $h$,
\besn							\label{12}
  \IE h(X) - \IE h(Z)
    & = \frac12\int_0^1\IE R_1(t)  dt
    - \frac12\int_0^1\int_0^1t^{1/2}\IE R_2(t,s)dsdt\\
    &\qquad + \frac12\int_0^1\int_0^1\int_0^1 s t^{1/2}\IE R_3(t,sr)drdsdt,
\ee
where
\ba
  R_1(t) & =
\sum_{k,l}(G_k\~D_l-S_{kl})h_{kl}(\sqrt{t}X''+\sqrt{1-t}Z),\\
  R_2(t,u) & = \sum_{k,l,m}(G_k\~D_l-S_{kl})D'_m
	  h_{klm}(\sqrt{t}X+u\sqrt{t}D'+\sqrt{1-t}Z),\\
  R_3(t,u) & = \sum_{k,l,m}G_kD_lD_m h_{klm}(\sqrt{t}X+u\sqrt{t}D+\sqrt{1-t}Z),
\ee
provided that\/ $\IE R_i(\cdot)$ exists for $i=1,2,3$. In particular,
\be
  \abs{\IE h(X) - \IE h(Z)}
     \leq \ahalf\sup_t\abs{\IE R_1(t) }
    + \athird \sup_{t,s}\abs{\IE R_2(t,s)}
    + \asixth\sup_{t,s}\abs{\IE R_3(t,s)}.
\ee
\end{lemma}

It seems rather difficult at this point to convey the purpose of all the random
vectors appearing in the lemma. Probably the best way to get an intuition for
such couplings is to go through the different applications given later on; we
also refer to \cite{Chen2010b} for the univariate case, where further examples
are discussed. We note that finding the appropriate random vectors will usually
require some trial and error.

\begin{remark}\label{rem1} Let us make a few comments at this point.

\begin{enumerate}
\item We will use the following simple fact in the applications. If $(X,X',G)$
is a Stein coupling satisfying the stronger condition \eq{9} and if there is a
$\sigma$-algebra $\%F''\supset\sigma(X'')$
such that
\ben                                                    \label{13}
  \IE(G_k\~D_l|\%F'')=\IE(S_{kl}|\%F''),
\ee
then $\IE R_1(t)=0$.
\item Except for the case of local dependence in \ref{sec4}, we will choose
$\~D=D$.
\item The result can be easily extended to include other error terms from the
proof of the lemma under weaker conditions. We will use the following extension
later on. If $(X,X',G)$ is not a Stein coupling, then one can include a measure
of how close \eq{7} is satisfied; with 
\bes
  R_0(t) & = \sum_{k}\bklgl
      X_k h_k(\sqrt{t}X +\sqrt{1-t}Z)-G_k h_k(\sqrt{t}X'+\sqrt{1-t}Z)
       \\[-2ex]
      &\kern16em+G_k h_k(\sqrt{t}X +\sqrt{1-t}Z)\bklgr,
\ee
an additional $\frac12\int_0^1\frac{1}{\sqrt{t}}\IE R_0(t)dt$ appears on
the right hand side of~\eq{12}. 
\item If $(X,X',G)$ is not a Stein coupling, the identities \eq{8} and \eq{9b}
are no longer valid and need to be replaced by
corresponding approximate versions.
\item Note that the difference $\abs{G_k \~D_l - S_{kl}}$ in $R_2(t,u)$ can
usually be estimated by $\abs{G_k \~D_l} + \abs{S_{kl}}$ without changing the
rates of convergence. This is not the case for $R_1(t)$, where more care is
required.
\end{enumerate}
\end{remark}

\begin{proof}[Proof of Lemma~\ref{lem1}] Define the interpolating sequence $Y_t
= \sqrt{t}X + \sqrt{1-t}Z$, $0\leq t\leq 1$. Starting from \eq{5}, and using
\eq{6} and~\eq{7}, we obtain
\besn								\label{14}
  & \IE h(X)-\IE h(Z) 
  = \int_0^1\frac{\partial}{\partial t}\IE h\klr{Y_t}dt\\
  & \quad =  \frac12\int_0^1\IE\bbbklg{
      \sum_k \frac{1}{\sqrt{t}}X_k h_k(Y_t)
      - \sum_k \frac{1}{\sqrt{1-t}}Z_k h_k(Y_t)}dt\\
  & \quad =  \frac12\int_0^1\IE\bbbklg{
      \sum_k \frac{1}{\sqrt{t}}G_k(h_k(Y'_t)-h_k(Y_t))
      - \sum_{k,l} \sigma_{kl} h_{kl}(Y_t)}dt,
\ee
where $Y'_t = \sqrt{t}X' + \sqrt{1-t}Z$. Let us recall the definition of
$R_1(t)$ and introduce two additional error terms:
\bg
  R_1(t) = \sum_{k,l}(G_k\~D_l-S_{kl})h_{kl}(Y''_t),\\
  R_4(t) := \sum_{k,l}(S_{kl}-\sigma_{kl})h_{kl}(Y_t),\qquad
  R_5(t)  := \sum_{k,l}G_k(D_l-\~D_l)h_{kl}(Y_t),
\ee
where $Y''_t = \sqrt{t}X'' +
\sqrt{1-t}Z$. Applying 
\be
  h_k(Y'_t)-h_k(Y_t)
    = \int_0^1 \sqrt{t}\sum_l D_l h_{kl}(Y_t + s\sqrt{t} D)ds
\ee
to \eq{14}, and adding and subtracting the terms from
$R_1(t)$, $R_4(t)$ and $R_5(t)$ yields
\bes
  &\IE h(X)-\IE h(Z) \\
    &\quad=\frac12\int_0^1\IE\bbbklg{
      \int_0^1\sum_{k,l} G_kD_l h_{kl}(Y_t+s\sqrt{t}D)ds - \sum_{k,l}
	\sigma_{kl} h_{kl}(Y_t)}dt\\
    &\quad=\frac12\int_0^1\IE\bklg{R_1(t)+R_4(t)+R_5(t)}dt\\
    & \qquad + \frac12\int_0^1\IE\bbbklg{
      \sum_{k,l} (G_k\~D_l-S_{kl})\bklr{h_{kl}(Y_t)-h_{kl}(Y''_t)}}dt\\
    & \qquad + \frac12\int_0^1\int_0^1\IE\bbbklg{
      \sum_{k,l} G_kD_l\bklr{ h_{kl}(Y_t+s\sqrt{t}D)-h_{kl}(Y_t)}}dsdt.
\ee
Note that, under \eq{11}, $\IE R_4(t)=\IE R_5(t)=0$. Taylor expansion in the
last two lines yields the final result; we refer to \cite{Chen2010b} for a more
detailed exposition of the proof in the univariate case. 
\end{proof}

We now derive general norm bounds from Lemma~\ref{lem1}, along the lines of
\cite{Raic2004}, \cite{Chatterjee2008a} and \cite{Meckes2009}. Let
$\norm{\cdot}$ be a norm on $\IR^n$ and let $\tnorm{\cdot}$ be a norm on
$\IR^{n\times n}$, the space of $n\times n$ matrices. Define the following
measures of smoothness of $h$. For $k\geq 1$, let
\be
  M_k(h) =
    \sup_{x\in\IR^n}
    \sup_{u^{(1)},\dots,u^{(k)}\in\IR^n}
    \sum_{i_1,\dots,i_k=1}^n
    \frac{u_{i_1}^{(1)}\cdots u_{i_k}^{(k)}}{\norm{u^{(1)}}\dots\norm{u^{(k)}}}
        h_{i_1,\dots,i_k}(x),
\ee
and for $k\geq 2$ define
\be
  {\wt M}_k(h) =
    \sup_{x\in\IR^n}
    \sup_{A\in\IR^{n\times n}}
    \sup_{u^{(3)},\dots,u^{(k)}\in\IR^n}
    \sum_{i_1,\dots,i_k=1}^n
    \frac{A_{i_1i_2}u_{i_3}^{(3)}\cdots u_{i_{k}}^{(k)}}
          {\tnorm{A}\,\norm{u^{(3)}}\cdots\norm{u^{(k)}}}
            h_{i_1,\dots,i_k}(x)
\ee
(if $k=2$, the third supremum in the definition of ${\wt M}_k(h)$ is just
ignored). We then have the following straitforward result.
\begin{lemma}\label{lem2} Under the assumptions of Lemma~\ref{lem1}, let
$\%F''$ be a
$\sigma$-algebra with $\sigma(X'')\subset\%F''$. Then, for all $0\leq t,s \leq
1$,
\ba
   \abs{\IE R_1(t)} &\leq {\wt M}_2(h)\IE\bklg{\tnorm{\IE(G\~D^t-S|\%F'')}},\\
   \abs{\IE R_2(t,s)} &\leq
     {\wt M}_3(h)\IE\bklg{\bklr{\tnorm{G\~D^t}+\tnorm{S}}\norm{D'}},\\
   \abs{\IE R_3(t,s)} &\leq
    M_3(h) \IE\bklg{\norm{G}\,\norm{D}^2}.
\ee
\end{lemma}

It is clear from this lemma that the optimal choice of the norms
$\norm{\cdot}$ and $\tnorm{\cdot}$ very much depends on the involved
random vectors and how they are coupled. This, in turn, determines which
functions $h$ are considered smooth enough to yield informative bounds.

Let us fix some notation before we proceed. We denote by
$\norm{\cdot}_\infty$ the supremum norm of functions. For $k\geq 1$ and a
$k$-times partially differentiable function $f:\IR^n\to \IR$, we let
\be
  \abs{f}_k = \sup_{1\leq i_1\leq \dots\leq i_k\leq n}
    \norm{f_{i_1\dots i_k}}_\infty.
\ee
For functions $g:\IR\to\IR$ we will use the notation $\norm{g'}_\infty$,
$\norm{g''}_\infty$,
$\dots$, instead of the equivalent $\abs{g}_1$, $\abs{g}_2$, \dots.

The couplings we construct in this article are such that the random vectors
and matrices in Lemma~\ref{lem2} are small with respect to the $L_1$-norms
\be
  \norm{u}_1 = \sum_{i=1}^n \abs{u_i}\qquad\text{and}\qquad
  \tnorm{A}_1 = \sum_{i,j=1}^n \abs{A_{ij}}.
\ee
It is not difficult to see that with respect to these norms we have
\be
     M_k(h) = {\wt M}_k(h) =\abs{h}_k.
\ee
For this reason, we will directly formulate our results in terms of $\abs{h}_k$.

Note that this is in contrast to the results for multivariate normal 
approximation of \cite{Chatterjee2008a} and \cite{Reinert2009}. There, the
vectors and matrices are typically closer in $L_2$ than
in $L_1$. \cite{Meckes2009} showed that in this case $\abs{\cdot}_k$
is too strong to measure the smoothness of $h$, resulting in suboptimal
dependence on the dimension. Using instead $M_k(h)$ and ${\wt M}_k(h)$ with
respect to the $L_2$-norms, \cite{Meckes2009} showed that the dependence on the
dimension can be
substantially reduced.

\begin{remark}\label{rem2}
One may be interested in comparing the distributions of $f(X)$ and $f(Z)$ for
some specific function $f:\IR^n\to \IR$. To this end, choose $h(x)=g(f(x))$ for
$g:\IR\to\IR$. Then, if \eq{1} is small for all three times differentiable
functions $g$, then we can conclude that $f(X)$ and $f(Z)$ are close in
distribution. We record the useful estimates
\bg
  \abs{h}_1  \leq \abs{f}_1 \norm{g'}_\infty,\qquad
  \abs{h}_2 \leq \abs{f}_2 \norm{g'}_\infty+\abs{f}_1^2\norm{g''}_\infty,\\
  \abs{h}_3 \leq \abs{f}_3\norm{g'}_\infty+3 \abs{f}_1\abs{f}_2
\norm{g''}_\infty
	+\abs{f}_1^3\norm{g'''}_\infty.
\ee
\end{remark}

\begin{remark} \label{rem3}
A particular function of interest is
\be
  f(x) = \log\sum_{p=1}^m e^{\beta y^{(p)}(x)}
\ee
for functions $y^{(p)}:\IR^n\to \IR$, $1\leq p \leq m$. Define $\gamma_k =
\sup_p \abs{y^{(p)}}_k$; it is straightforward  to check that
\be
    \abs{f}_1 \leq \beta\gamma_1,
	\quad	
	\abs{f}_2 \leq \beta\gamma_2 
	+ 2\beta^2\gamma_1^2, 
	\quad 
	\abs{f}_3  \leq \beta\gamma_3
	    + 6\beta^2 \gamma_1\gamma_2
	    + 6\beta^3 \gamma_1^3.
\ee

\end{remark}

\section{Couplings}\label{sec2}

Many of the Stein couplings discussed by \cite{Chen2010b} can be adapted to the
multivariate case: exchangeable pairs, size-biasing, local dependence, etc.
Instead of generalising all of them here (which will be done elsewhere with
emphasis on multivariate normal approximation for fixed dimension) we only go
through a few of them explicitly and instead present some other couplings not
discussed by \cite{Chen2010b}. 

\subsection{A theoretical result}\label{sec3}

One may wonder if, given a pair $(X,X')$ with $\IE X=0$, there exists a
$G$ to make the triple
$(X,X',G)$ a Stein coupling. This question has been answered by \cite{Chen2010b}
for the univariate case, but the construction given there can also be used in
the multivariate setting. Let $\%F=\sigma(X)$ be the $\sigma$-algebra induced by
$X$ and let $\%F'=\sigma(X')$. Define formally the sequence
\be
  G = -X + \IE(X|\%F') - \IE(\IE(X|\%F')|\%F)
      + \IE(\IE(\IE(X|\%F')|\%F)|\%F') - \cdots.
\ee
If the sequence converges absolutely in each coordinate, then this will make
$(X,X',G)$ a Stein coupling. Indeed, $\IE(G|\%F) = -X$ and $\IE(G|\%F')=0$ so
that \eq{7} is satisfied. 

To motivate the choice of $G$ used in the next few settings, consider the case
where the coordinates of $X$ are independent. Let $I$ be uniformly distributed
on $\{1,\dots,n\}$, independent of all else. Define the vector $X^{(i)}$ by
\be
  X^{(i)}_k = (1-\delta_{ki})X_k,
\ee
where $\delta_{ij}$ is the Dirac delta function. Let $X' = X^{(I)}$; that is,
$X'$ is the vector where we have set a randomly chosen coordinate to $0$. Denote
by $e_i$ the unit vector in direction $i$. Using independence of the
coordinates,
\be	
  \IE(X|\%F') = \IE\bklr{X^{(I)}+e_IX_I\bmid X^{(I)}} = X^{(I)}
\ee
and
\be	
  \IE(X'|\%F) = \frac{1}{n}\sum_{i} X^{(i)} = (1-\anth)X.
\ee
Hence,
\bes
  G & = -X + X' - (1-\anth)X + (1-\anth)X'
      - (1-\anth)^2X  + (1-\anth)^2X'+\dots\\
    & = -e_IX_I - (1-\anth)e_IX_I -(1-\anth)^2 e_IX_I -  \dots = -n e_IX_I.
\ee

\subsection{Independent coordinates}

In order to illustrate the method in a simple setting, we start with independent
coordinates using $(X,X',G)$ derived in the previous section.

\begin{theorem}\label{thm1} 
Let $X$ be as in \eq{10} and assume the coordinates of $X$ are independent. If
$Z$ is a vector of independent centred Gaussian random variables with the same
variances as $X$, then
\be
  \abs{\IE h(X) - \IE h(Z)}
      \leq \frac{5}{6} \sum_{i}\tau_i^3\norm{h_{iii}}_\infty.
\ee
\end{theorem}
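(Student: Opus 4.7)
The plan is to apply Lemma~\ref{lem1} with the explicit Stein coupling constructed at the end of Section~\ref{sec3}: let $I$ be uniform on $\klg{1,\dots,n}$ and independent of $X$, and take $X'=X^{(I)}$ and $G=-ne_IX_I$, so that $D=-e_IX_I$. I would make the remaining choices in the lemma as trivial as possible by setting $\~D=D$, $X''=X$ (so $D'=0$ and $R_2\equiv 0$), and the deterministic $S_{kl}=\sigma_{kl}=\delta_{kl}\sigma_k^2$; both parts of \eq{10} then hold on the nose, and the problem reduces to bounding $R_1$ and $R_3$.

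The term $R_3$ is easy: $G_kD_lD_m=-n\delta_{kI}\delta_{lI}\delta_{mI}X_I^3$ collapses $R_3(t,u)$ to $-nX_I^3h_{III}(\cdots)$, so that conditioning first on $I$ and then on $X$ yields $\abs{\IE R_3(t,u)}\leq\sum_i\tau_i^3\norm{h_{iii}}$; the triple integral in \eq{11} multiplies this by $\asixth$.

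The actual work is $R_1$. Taking conditional expectations over $I$ reduces it to
\be
  \IE R_1(t)=\IE\sum_k(X_k^2-\sigma_k^2)h_{kk}(\sqrt{t}X+\sqrt{1-t}Z),
\ee
and the task is to convert a second derivative times a variance-type factor into a third derivative times a third-moment coefficient. The one nontrivial step in the proof is to exploit the independence of the coordinates by Taylor-expanding $h_{kk}$ in its $k$th argument about $X_k=0$: the zeroth-order term is a function of $X^{(k)}$ and $Z$ only, hence independent of the mean-zero factor $X_k^2-\sigma_k^2$ and vanishes in expectation, while the first-order Taylor remainder carries an extra $\sqrt{t}X_k$ and an $h_{kkk}$. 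Bounding $\IE\abs{X_k(X_k^2-\sigma_k^2)}\leq\IE\abs{X_k}^3+\sigma_k^2\IE\abs{X_k}\leq 2\tau_k^3$ via Jensen's inequality then gives $\abs{\IE R_1(t)}\leq 2\sqrt{t}\sum_k\tau_k^3\norm{h_{kkk}}$.

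Integrating this bound against $\ahalf$ from $0$ to $1$ produces a contribution $\tsfrac{2}{3}\sum_i\tau_i^3\norm{h_{iii}}$, and combining it with the $\asixth\sum_i\tau_i^3\norm{h_{iii}}$ from $R_3$ yields precisely $\tsfrac{5}{6}\sum_i\tau_i^3\norm{h_{iii}}$. Apart from the independence-plus-Taylor argument for $R_1$, everything else is bookkeeping inside the identity \eq{11}.
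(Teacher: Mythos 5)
Your proof is correct and reaches the stated constant, but it takes a genuinely different route through Lemma~\ref{lem1} than the paper does. The paper keeps the same Stein coupling $(X, X^{(I)}, -ne_IX_I)$ that you use, but sets $X''=X^{(I)}$ and the \emph{random} matrix $S_{kl}=n\sigma_k^2\delta_{kI}\delta_{lI}$; since $(G,D)$ is then independent of $X''$, Remark~\ref{rem1}(i) gives $\IE R_1(t)=0$ for free, and the bookkeeping lands in $R_2$, which is bounded by $2\sum_i\tau_i^3\norm{h_{iii}}$. The paper then invokes only the coarse ``In particular'' sup-bound: $\athird\cdot 2 + \asixth\cdot 1 = \frac56$. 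Your choices $X''=X$ (so $D'=0$, $R_2\equiv 0$) and deterministic $S_{kl}=\sigma_{kl}$ instead push all the work into $R_1$, which you then kill by Taylor-expanding $h_{kk}$ in its $k$th slot about $X_k=0$ and using independence of the coordinates so that the zeroth-order term drops out; this is in effect re-doing by hand the Taylor step that the lemma automates via the $X''$ device. Two remarks: (i) your use of the full integral identity \eq{11} rather than the ``In particular'' bound is essential — you need the $\sqrt t$ weight to turn $\ahalf\int_0^1 2\sqrt t\,dt=\tsfrac23$ into the right constant, since the sup-bound would give $\ahalf\cdot 2 = 1$ and a total of $\tsfrac76$; and (ii) the bound $\IE\abs{X_k(X_k^2-\sigma_k^2)}\leq 2\tau_k^3$ relies on $\sigma_k^2\leq\tau_k^2$ and $\IE\abs{X_k}\leq\tau_k$, which follow from Lyapunov/Jensen as you say. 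In short, the paper's choice of $X''$ is the intended slick use of the lemma, while yours trades that for an extra explicit Lindeberg-type argument and a finer use of the identity; both are valid and both give $\tsfrac56$.
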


\begin{proof}
Let $G_i = -n\delta_{iI}X_i$ and $X' = X'' = X^{(I)}$, hence $D_i = D_i' =
-\delta_{iI}X_i$. Let $\~D=D$ and $S_{ij}=n\sigma_i^2\delta_{iI}\delta_{jI}$. It
is easy to see that $(X,X',G)$ is a Stein coupling satisfying the stronger
condition \eq{9}, that \eq{11} is satisfied
and that \eq{13} holds with $\%F'' = \sigma(X'',I)$; hence $\IE R_1(t)=0$. The
following estimates are immediate:
\ba
  \abs{R_2(t)} 
    &\leq \sum_{i}
\norm{h_{iii}}_\infty\bklr{\sigma^2_i\IE\abs{X_i}+\IE\abs{X_i}^3}
     \leq 2 \sum_{i} \norm{h_{iii}}_\infty\IE\abs{X_i}^3 ,\\
  \abs{R_3(t)} 
    &\leq \sum_{i} \norm{h_{iii}}_\infty \IE\abs{X_i}^3.
\ee
Lemma \ref{lem1} concludes the theorem.
\end{proof}

Using Lindeberg's telescoping sum and Taylor expansion, and noting
that the first two moments of $X$ and $Z$ match, one easily obtains
\bes
  &\abs{\IE h(X) - \IE h(Z)}\\
      &\qquad \leq \frac{1}{6}
      \sum_{i}(\IE\abs{X_i}^3+\IE\abs{Z_i}^3)\norm{h_{iii}}_\infty
     \leq \frac{(1+\sqrt{8/\pi})}{6}     
	\sum_{i}\tau_i^3	  \norm{h_{iii}}_\infty.
\ee

Not surprisingly, the constants obtained via Stein's method are larger for the
case of independent random variables. However, applications with dependencies is
the main purpose of using Stein's method.

\subsection{Weak dependence}

A simple way to measure how much a single coordinate $X_i$ is influenced by the
other coordinates is to look at the fluctuation of the conditional mean and
variance of $X_i$. To this end, let $X$ be as in \eq{10} and define $X^{(i)}$
as in Section~\ref{sec3}. Furthermore, let
\be
  \mu_i\bklr{X^{(i)}} = \IE\bklr{X_i\!\bmid\!X^{(i)}},
  \qquad	
  \sigma_i^2\bklr{X^{(i)}} = \Var\bklr{X_i\!\bmid\!X^{(i)}}.
\ee
Then we have the following.

\begin{theorem}\label{thm2} Let $X$ be as in \eq{10} and let
$Z\sim\MVN_n(0,\Sigma)$. Then
\bes
  & \babs{\IE h(X) - \IE h(Z)} \\
  & \quad\leq
    \sum_{i}\norm{h_i}_\infty\IE\abs{\mu_i(X^{(i)})} + 
    \frac12\sum_i\norm{h_{ii}}_\infty\bklr{\IE \mu_i(X^{(i)})^2
	  +\IE\abs{\sigma_i^2(X^{(i)})-\sigma_i^2}}\\
  &\qquad + \frac{5}{6}\sum_{i}\tau_i^3\norm{h_{iii}}_\infty
\ee
\end{theorem}

\begin{proof} Define $G$, $X'$, $X''$, $S$ as in the proof of
Theorem~\ref{thm1}; the error terms $R_2$ and $R_3$ can be
bounded in the same way. As $(X,X',G)$ is not
necessarily a Stein coupling, we need the additional error term
\be
  \IE R_0(t) = \IE\sum_{i}X_i h_i(\sqrt{t}X^{(i)}+\sqrt{1-t}Z)
   = \IE\sum_{i}\mu_i(X^{(i)}) h_i(\sqrt{t}X^{(i)}+\sqrt{1-t}Z)
\ee
(see Remark~\ref{rem1}). Furthermore,
\bes
  \IE R_1(t) & = \IE\sum_{i}(X_i^2-\sigma_i^2)
h_{ii}(\sqrt{t}X^{(i)}+\sqrt{1-t}Z)\\
  & = \IE\sum_{i}((X_i-\mu_i(X^{(i)}))^2-\sigma_i^2-\mu_i(X^{(i)})^2)
h_{ii}(\sqrt{t}X^{(i)}+\sqrt{1-t}Z)
\ee
This easily leads to the final bound. 
\end{proof}

Note that if the $X_i$ are independent, Theorem~\ref{thm2} reduces to
Theorem~\ref{thm1}. \cite{Gotze2006} assumed that $\mu_i(X^{i})=0$
almost surely to obtain convergence rates to the semi-circular law in random
matrix theory under such dependence.

\subsection{Constant sum and symmetry}

Recall the classic occupancy problem from the introduction. The sum of the
vector that describes the number of balls in each urn is equal to the
total number of balls and hence, itself, does not satisfy a central limit
theorem. This motivates us to consider general centered vectors $X$ that satisfy
\ben								\label{15}
  \sum_{i}^n X_i = 0
\ee
almost surely.

To apply our method, we will need to make more assumptions. A random vector
$X=(X_1,\dots,X_n)$ is called exchangeable if its distribution is invariant
under permutation of the coordinates.  Note that \eq{15} implies
$\sum_{j}\sigma_{ij} = 0$ for each~$i$, and combined with exchangeability, we
therefore have
\ben                            \label{16}
  \sigma_{ij} = -\frac{\sigma_1^2}{n-1}
\ee
for all $i\neq j$. 
  
\begin{theorem}\label{thm3} Let $X$ be an exchangeable random vector satisfying
\eq{10} and let $Z\sim \MVN_n(0,\Sigma)$. Then
\ben								\label{17}
    \abs{\IE h(X) - \IE h(Z)} \leq
\abs{h}_2\bbbkle{\Var\bbbklr{\sum_{i} X_i^2}}^{1/2}
  + 16\abs{h}_3n\tau_1^3.
\ee
\end{theorem}

\begin{remark} Note that the theorem can also be applied if $X$ is not
exchangeable, but $h$ symmetric instead, that is if $h(x)$ remains the same
under any permutation of the coordinates. In that case,
Theorem~\ref{thm3} can be applied to the randomly permuted $X$. Note that
$\tau_1^3$ is then replaced by $n^{-1}\sum_{i}\tau_i^3$ for the final result.
\end{remark}

\begin{proof} [Proof of Theorem~\ref{thm3}]
For $x\in\IR^n$, let $x^{ik}\in\IR^n$ be
the vector obtained by interchanging the $i$th and $k$th coordinate of $x$ (if
$i=k$
then $x^{ik}=x$). Note that due to exchangeability,
\ben							\label{18}
  \IE\bklg{\phi(X_i)h_{i}(X^{ik})} = \IE\bklg{\phi(X_k)h_i(X)},
\ee
for any function $\phi$ for which the expectations exist.
Furthermore, for $(i,j,k,l)\in [n]^4$ with 
\ben								\label{19}
	i=k\iff j=l,
\ee
denote by $x^{ijkl}$ a permutation of $x$ such that
\ben							\label{20}
	\IE\bklg{\phi(X_j,X_l) h_{ik}(X)} 
        = \IE\bklg{\phi(X_i,X_k) h_{ik}(X^{ijkl})}                      
\ee
for all functions $\phi$ for which the expectations exist.  Note that this
permutation can be defined independently of~$x$ and $h$: if $X$ is
exchangeable, keep $[n]\setminus\{i,j,k,l\}$ fixed, map $j\mapsto
i$ and $l\mapsto k$ and map the remaining numbers among each other in
any arbitrary, but fixed way. Let $(I,J,K,L)$ be distributed on $[n]^4$, such
that $(I,J,K,L)$ is uniform on $[n]^3$ and, given $(I,J,K)$, $L$ is uniform on
$[n]\setminus\{J\}$ if $I\neq K$, and $J=L$ if $I = K$; hence, $(I,J,K,L)$
satisfies~\eq{19}. Define
\be
	X' := X^{I K},
  \qquad 
	X'' := X^{I J K L},  
\ee
and
\be
    G_k = -n\delta_{kI}X_k,\qquad {\~D}_k = D_k, \qquad
    S_{kl} = n^2\delta_{kI}\delta_{lK}\sigma_{kl};
\ee
note that
\be
    D_l = \delta_{lI}(X_{K}-X_{I}) +\delta_{lK}(X_{I}-X_{K}),
    \quad
    D'_l = \sum_{m\in\{I,J,K,L\}}\delta_{lm}(X''_{l}-X_l).
\ee
Fix $t$ and let, for notational convenience, $f_\cdot(x) = \IE
h_\cdot(\sqrt{t}x+\sqrt{1-t}Z)$, where $\cdot$ stands for $i$, $ij$ or $ijk$.
Clearly, $\IE\bklg{G_k f_k(X)} = \IE\bklg{X_k f_k(X)}$. Using exchangeability of
$X$, we can use \eq{18} to obtain
\bes
  \IE\sum_{k} G_k f_k(X')& = -n\IE\klg{X_{I} f_{I}(X^{IK})}
      = -n\IE\klg{X_{K} f_{I}(X)}\\
     & = -\frac{1}{n}\IE\sum_{i,k}X_k f_{i}(X)
     =0.
\ee
Hence, $(X,X',G)$ is a Stein coupling. Now,
\bes
  & \IE R_1(t) \\
   & =\IE\sum_{k,l}(S_{kl}-G_kD_l)f_{kl}(X'')\\
    & =\IE\sum_{k,l}\bkle{
    n^2\delta_{kI}\delta_{lK}\sigma_{kl}+\delta_{kI}n X_k
      \bklr{\delta_{lI}(X_{K}-X_{l}) +\delta_{lK}(X_{I}-X_l)}}f_{kl}
(X'')\\
    & =n^2 \IE \sigma_{I K}f_{I K}(X'')+n\IE X_{I}
      (X_{K}-X_{I})f_{{I}{I}}(X'') +
	  n\IE
	    X_{I}(X_{I}-X_{K})f_{I K}(X'').
\ee
Using exchangeability and \eq{16},
\bes
  n^2 \IE \klg{\sigma_{I K}f_{I K}(X'') }
  & = n^2 \IE \klg{\sigma_{JL}f_{IK}(X)} \\
  & = \frac{1}{n}\IE\sum_{i,j}\sigma_{jj}f_{ii}(X)
      + \frac{1}{n(n-1)}
	\IE\sum_{i,j,k\neq i, l\neq j}\sigma_{jl}f_{ik}(X)\\
  & = \sigma_1^2\IE\sum_{i}f_{ii}(X)
      - \frac{\sigma^2_1}{n-1}
	\IE \sum_{i,k\neq i}f_{ik}(X)\\
\ee
Furthermore, using \eq{20} and \eq{15},
\bes
  n\IE \klg{X_{I}       (X_{K}-X_{I})f_{{I}{I}}(X'')}
  &= n\IE\klg{ X_{J}
      (X_{L}-X_{J})f_{II}(X)} \\
  &= \frac{1}{n^2}\IE\sum_{i,j,l}X_{j}
      (X_{l}-X_{j})f_{ii}(X) \\
  &= -\frac{1}{n}\IE\sum_{j}X_{j}^2\sum_{i}f_{ii}(X) \\
\ee
and
\bes
  &n\IE\klg{ X_{I}(X_{I}-X_{K})f_{I K}(X'')}\\
  &\qquad = n\IE\klg{ X_{J}(X_{J}-X_{L})f_{I K}(X)}\\
  &\qquad = \frac{1}{n^2(n-1)}\IE\sum_{i,j,k\neq i,l\neq j}
X_{j}(X_{j}-X_{l})f_{ik}(X)\\
  &\qquad = \frac{1}{n^2}\IE\sum_{i,j,k\neq i}
X_{j}^2f_{ik}(X)
  - \frac{1}{n^2(n-1)}\IE\sum_{i,j,k\neq
i,l\neq j}
X_{j}X_{l}f_{ik}(X)\\
  &\qquad = \frac{1}{n^2}\IE\sum_{j}X_j^2\sum_{i,k\neq i} f_{ik}(X)
  - \frac{1}{n^2(n-1)}\IE\sum_{j,l\neq
j}X_{j}X_{l}\sum_{i,k\neq i}f_{ik}(X)\\
  &\qquad = \frac{1}{n^2}\IE\sum_{j}X_j^2\sum_{i,k\neq i} f_{ik}(X)
  + \frac{1}{n^2(n-1)}\IE\sum_{j}X_{j}^2\sum_{i,
k\neq i}f_{ik}(X)\\
  &\qquad = \frac{1}{n(n-1)}\IE\sum_{j}X_{j}^2\sum_{i,
k\neq i}f_{ik}(X),
\ee
where for the last equality we used that
$\frac{1}{n(n-1)}=\frac{1}{n-1}-\frac{1}{n}$. Hence,
\bes
  \abs{\IE R_1(t)}
    & \leq 
\frac{1}{n(n-1)}\bbbabs{\IE\bbbklg{\bbbklr{\sum_{j}X_j^2-n\sigma_1^2 }
      \sum_{i,k\neq i} f_{ik}(X)}}\\
    &\qquad +
\frac{1}{n}\bbbabs{\IE\bbbklg{\bbbklr{\sum_{j}X_j^2-n\sigma_1^2}
      \sum_{i} f_{i}(X)}}\\
    & \leq 2\abs{h}_2\bbbkle{\Var\bbbklr{\sum_{i} X_i^2}}^{1/2}.
\ee
This gives the first part of the result. 
Now,
\bes
  \IE R_2(t,u) & = \IE \sum_{k,l,m}(S_{kl}-G_k\~D_l)D'_m
	  f_{klm}(X+uD')\\
  & = n^2\IE\sum_{m\in\{I,J,K,L\}}\sigma_{IK}(X''_m - X_m) 
f_{IKm}(X+uD')\\
      & \quad-n\IE\sum_{l\in
      \{I,K\},m\in\{I,J,K,L\}}X_{I}(X'_l-X_l) (X''_m - X_m)
	  f_{Ilm}(X+uD')
\ee
hence
\be
  \abs{\IE R_2(t,u)} \leq
    8\abs{h}_3\tau_1\sum_{i,j}\abs{\sigma_{ij}}+32
    \abs{h}_3n\tau_1^3
    \leq
    8\abs{h}_3\tau_1 n\sigma^2_1+32
    \abs{h}_3n\tau_1^3.
\ee
Similarly,
\bes
  \IE R_3(t,u) & = \sum_{k,l,m}G_kD_lD_m h_{klm}(X+uD)\\
    & = n\IE \sum_{l\in\{I,K\},m\in\{I,K\}}X_{I} (X'_l-X_l)(X'_m-X_m)
h_{klm}(X+uD),
\ee
hence
\be
  \abs{\IE R_3(t,u)}\leq 16\abs{h}_3n\tau^3_1 .\qedhere
\ee
\end{proof}

\subsection{Local dependence}\label{sec4}

Stein couplings to handle local dependence has already been discussed by
\cite{Chen2010b}, based on similar decompositions that appeared in many other
places; we refer to the more detailed discussion in \cite{Chen2010b}.
In particular, multivariate normal approximation for sums of locally dependent
random vectors was considered by \cite{Rinott1996} and \cite{Raic2004}.

Let $X=(X_1,\dots,X_n)$ be as in \eq{10}. 
Assume that, for each $i\in[n]:=\{1,\dots,n\}$, there is a subset
$A_i\subset[n]$ such that 
$X_{A^c_i}$ and $X_i$ are independent. Assume further that for each
$i\in[n]$ and $j\subset A_i$ there is a subset $B_{ij}\subset[n]$ such that
$A_i\subset B_{ij}$ and $X_{B^c_{ij}}$ is independent of $(X_i,X_j)$. Central
limit theorems for sums of random variables satisfying this refined
version of local dependence were analyzed in detail by \cite{Barbour1989}.

\begin{theorem}\label{thm4} Let $X$ as above. Let $Z\sim\MVN_n(0,\Sigma)$.
Then, for any three times partially
differentiable function $h$,
\bes
  \abs{\IE h(X) - \IE h(Z)} &\leq 
      \frac13\sum_{i}\sum_{j\in A_i}\sum_{k\in	B_{ij}}
	   \bklr{\abs{\sigma_{ij}}\IE\abs{X_k}
		+ \IE\abs{X_iX_jX_k}}\norm{h_{ijk}}_\infty\\
      &\qquad +\frac16\sum_{i}\sum_{j,k\in A_i}
          \IE\abs{X_iX_jX_k}\norm{h_{ijk}}_\infty
       \leq \frac56\-\tau^3n\eta\abs{h}_3
\ee
where $\eta = \sup_{i}\sum_{j\in A_i}\abs{B_{ij}}$.
\end{theorem}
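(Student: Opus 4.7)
The plan is to apply Lemma~\ref{lem1} with a Stein coupling tailored to the local dependence structure, in the spirit of \cite{Chen2010b}. Let $I$ be uniform on $[n]$, put $G_k = -n\delta_{kI}X_k$, and define $X'_k = X_k\Ione[k\notin A_I]$, so that $D_l = -\Ione[l\in A_I]X_l$. The Stein coupling identity \eq{7} is then immediate from the fact that $X_i$ is independent of $X_{A_i^c}$: for any smooth $f$ and every $i$, $\IE\klg{X_i f_i(X_{A_i^c})}=0$, and summing over $i$ gives \eq{7}.

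To exploit the finer $B_{ij}$ structure, introduce an additional randomisation: let $J$ be uniform on $A_I$ given $I$, and set
\be
  \~D_l=-\abs{A_I}\delta_{lJ}X_J,\qquad S_{kl}=n\abs{A_I}\delta_{kI}\delta_{lJ}\sigma_{kl},\qquad X''_k=X_k\Ione[k\notin B_{IJ}].
\ee
A short calculation verifies \eq{10}: averaging over $J$ gives $\IE(\~D_l|X,I)=D_l$ and hence $\IE(G_k\~D_l|X)=\IE(G_kD_l|X)$; and $\IE(S_{kl}|X)=\Ione[l\in A_k]\sigma_{kl}=\sigma_{kl}$, the last equality because $\sigma_{kl}=0$ whenever $l\notin A_k$.

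The key observation is that with these choices $\IE R_1(t)=0$. Indeed,
\be
  R_1(t)=n\abs{A_I}\bklr{X_IX_J-\sigma_{IJ}}h_{IJ}\bklr{\sqrt{t}X''+\sqrt{1-t}Z},
\ee
and conditional on $(I,J)=(i,j)$ the vector $X''$ is a deterministic function of $X_{B_{ij}^c}$, which by hypothesis is independent of $(X_i,X_j)$; pulling the factor $X_iX_j-\sigma_{ij}$ out of the expectation therefore yields zero. The remaining terms are straightforward. With $D'_m=-\Ione[m\in B_{IJ}]X_m$,
\be
  R_2(t,u)=n\abs{A_I}\bklr{X_IX_J-\sigma_{IJ}}\sum_{m\in B_{IJ}}(-X_m)h_{IJm}(\sqrt{t}X+u\sqrt{t}D'+\sqrt{1-t}Z),
\ee
and the prefactor $n\abs{A_I}$ cancels upon averaging $\frac{1}{n}$ over $I$ and $\frac{1}{\abs{A_I}}$ over $J$, reproducing the first triple sum in the statement with coefficient $\frac13$ from Lemma~\ref{lem1}. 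Similarly, $R_3=-n\sum_{l,m\in A_I}X_IX_lX_m h_{Ilm}(\sqrt{t}X+u\sqrt{t}D+\sqrt{1-t}Z)$ averages to the second triple sum with coefficient $\frac16$.

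The simplified bound $\frac{5}{6}\-\tau^3 n\eta\abs{h}_3$ then follows by inserting $\abs{\sigma_{ij}}\leq\-\tau^2$, $\IE\abs{X_k}\leq\-\tau$, $\IE\abs{X_iX_jX_k}\leq\-\tau^3$, and $\abs{A_i}^2\leq\sum_{j\in A_i}\abs{B_{ij}}$ (because $A_i\subset B_{ij}$), so that the $R_3$-contribution folds cleanly into $n\eta$ alongside that of $R_2$: this yields $\frac23+\frac16=\frac56$. The main obstacle I expect is identifying the correct auxiliary randomisation $(J,\~D,S,X'')$ so that $\IE R_1$ truly vanishes and the prefactors $n\abs{A_I}$ in $R_2$ and $R_3$ collapse exactly into the stated double-sum indexing; the remaining estimates are routine term-by-term bookkeeping.
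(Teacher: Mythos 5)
Your proposal is correct and follows essentially the same route as the paper: the same Stein coupling $G_k=-n\delta_{kI}X_k$, $X'_k=\Ione[k\notin A_I]X_k$, $X''_k=\Ione[k\notin B_{IJ}]X_k$, with $J$ uniform on $A_I$ given $I$, and the same $S_{kl}=n\abs{A_I}\delta_{kI}\delta_{lJ}\sigma_{kl}$. The paper's proof is much terser (it merely states the coupling and invokes Lemma~\ref{lem1}), so your explicit verification of \eq{10}, of $\IE R_1(t)=0$, and of the final arithmetic $\tfrac23+\tfrac16=\tfrac56$ is a genuine addition. One point worth flagging: the paper defines $\~D_k=-\abs{A_J}\delta_{kJ}X_k$, whereas you use $\~D_l=-\abs{A_I}\delta_{lJ}X_J$. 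Your version is the correct one. With the paper's $\abs{A_J}$, one computes $\IE(G_k\~D_l\mid X)=\Ione[l\in A_k]\tfrac{\abs{A_l}}{\abs{A_k}}X_kX_l$, which does not match $\IE(G_kD_l\mid X)=\Ione[l\in A_k]X_kX_l$ unless all $A_i$ containing a common index have equal size; with your $\abs{A_I}$ the identity holds exactly, since $\IE(\~D_l\mid X,I)=D_l$ and $G$ is $(X,I)$-measurable. So you have implicitly corrected a typo. The remaining bookkeeping (cancellation of $n\abs{A_I}$ against the averaging weights $\tfrac1n\tfrac1{\abs{A_i}}$, the bound $\sum_i\abs{A_i}^2\leq\sum_i\sum_{j\in A_i}\abs{B_{ij}}\leq n\eta$ via $A_i\subset B_{ij}$, and the moment bounds $\abs{\sigma_{ij}}\leq\-\tau^2$, $\IE\abs{X_k}\leq\-\tau$, $\IE\abs{X_iX_jX_k}\leq\-\tau^3$) is all in order.
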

\begin{proof} Let $I$ be uniform on $[n]$ and, given $I$, let $J$ be uniform on
$A_I$. Define the vectors $X'$, $X''$, $G$ and $\~D$ and the matrix $S$ as
\bg
    G_k = -\delta_{kI}n X_k,\qquad
    X'_k = \I[k\not\in A_I]X_k,\qquad
    X_k'' = \I[k\not\in B_{IJ}]X_k,\\
    S_{kl} = n\abs{A_I}\delta_{kI}\delta_{lJ}\sigma_{kl},\qquad
    \~D_k = -\abs{A_J}\delta_{kJ} X_k.
\ee
Note that $X'$ is independent of $G$, which makes $(X,X',G)$ a Stein coupling
satisfying the stronger condition \eq{9}, similarly as for the independent
case. Furthermore, with $\%F''=\sigma(X'',I)$, \eq{13} holds and therefore $\IE
R_1(t)=0$. The final bound follows now easily from
Lemma~\ref{lem1}. 
\end{proof}

As we can see from the case of the CLT, where $h(x) =
g(\sum_i x_i)$, the typical scaling of $X$ is such that
$\-\tau^3 \asymp n^{-3/2}$. With this scaling, a ``typical'' function $h$ will
have the property that $\IE\abs{h(X)}\asymp  1 $, whereas the bound of
Theorem~\ref{thm4} is of order $\bigo(n^{-1/2})$.

Note that an $m$-dependent sequence is a special case of local dependence: we
have
$\abs{A_i} = 1+2m$ and $B_{ij} \leq 1+3m$. However, the crucial aspect here is
that the exact
structure of the dependence is only important in terms of the size of
$A_i$ and $B_{ij}$. Any graph with maximal degree $m$ that describes the
dependence structure of $X$ (that is, two subsets of vertices are independent if
there is no edge between them) will have the upper bounds $\abs{A_i}\leq 1+m$
and $\abs{B_{ij}}\leq 1+2m$. In that case, 
\ben								\label{21}
  \eta \leq 2(m+1)^2.
\ee

\section{Applications}\label{sec5}

In this section, we present two different types of applications.
First, we consider concrete functions $h$, for which we determine
under what kind of dependencies \eq{1} is small. If we can control the
first three derivatives of $h$, then we can analyse the universality of the
given $h$ with respect to dependence, for example for the different settings of
the previous section. The first two applications below are of this type. We
analyse universality with respect to local dependence only, but it is clear that
many of the other settings can be used instead. In the case of local dependence,
we are interested in how big the ``neighbourhoods'' $A_i$ and $B_{ij}$ are
allowed to become while keeping the bounds on \eq{1} small enough. We use $\eta$
from Theorem~\ref{thm4} as a simple measure of neighbourhood size, and hence
dependence. These applications are closely related to \cite{Chatterjee2005a}.
Whereas in the first application of the SK-model the dependence enters in a
straightforward way, in the second application of last passage percolation on
thin rectangles, an certain optimisation step has to be recalculated, including
the measure of dependence~$\eta$.

As a second type of application, we can consider more concrete vectors $X$, for
which we want to show that \eq{1} is small for a large class of functions~$h$.
In this situation, the structure of the dependence of $X$ will either fit into
one of the abstract settings of the previous section (this is the case for
classic occupancy), or else, one has to construct a Stein coupling from
scratch; the latter is the case for the Curie-Weiss model.

\subsection{Environment with dependencies in the Sherrington-Kirkpatrick spin
glass
model} \label{sec6}

Consider the $N$-spin system $\{-1,1\}^N$. To each configuration
$\sigma\in\{-1,1\}^N$ we assign the (random) Hamiltonian
\be
  H_N(\sigma) = \frac{\beta}{\sqrt{N}}\sum_{i<j}\xi_{ij}\sigma_i\sigma_j,
\ee
where $\xi=(\xi_{ij})_{1\leq i< j\leq n}$ is a family of random variables,
which we call the \emph{environment}. Given the environment $\xi$, we assign to
each $\sigma$ the probability
\be
  \IP^\xi_{N}(\sigma) = \frac{e^{ H_N(\sigma)}}{Z_N(\beta,\xi)},
\ee
where
\be
  Z_N(\beta,\xi) = \sum_{\sigma}e^{\beta H_N(\sigma)}.
\ee
Let
\be
  p_N(\beta) = \frac{1}{N}\IE\log Z_N(\beta,\xi).
\ee
It was proved by \cite{Talagrand2006} that $p_N(\beta) \to p_\infty(\beta)$,
the solution of the \emph{Parisi formula}, if the $\xi_{ij}$ are independent
standard Gaussians. \cite{Carmona2006} showed that the same limit
holds if the Gaussians are replaced by independent copies of any random
variable $\xi$ with $\IE \xi = 0$ and $\IE\abs{\xi}^3< \infty$.
We shall extend this results to dependent environments. To this end define
\be
  \~Z_n(\beta,\xi) 
    = \IE^\xi\bklg{e^{\beta\sum_{i=1}^n Y_i\xi_i}}.
\ee
where $Y_1,\dots,Y_n$ is any family of random variables such that $Y_i$
only takes finitely many values and $\abs{Y_i}\leq 1$ for all~$i$.

\begin{lemma}\label{lem3} Let $\xi = (\xi_1,\dots,\xi_n)$ be a random
environment such that $\IE\xi_i=0$, $\IE\xi_i^2=1$ and $\IE\abs{\xi_i}^3\leq
\-\tau^3<\infty$, satisfying the dependence structure of Theorem~\ref{thm4}. Let
$g\sim\MVN_n(0,\Sigma)$ where $\Sigma$ is the covariance
matrix of $\xi$. Then
\ben						\label{22}
  \abs{\IE\log \~Z_n(\beta,\xi)-\IE\log \~Z_n(\beta,g)}
      \leq 5\beta^3\-\tau^3n\eta.
\ee
\end{lemma}
\begin{proof} Let $h(\xi) = \log \~Z_n(\beta,\xi)$;
it is easy to see from Remark~\ref{rem3} that
\be
  \norm{h_{ijk}}\leq 6\beta^3.
\ee
(note that $\gamma_2=\gamma_3=0$ and $\gamma_1\leq 1$ as $\abs{Y_i}\leq 1$).
Using Theorem~\ref{thm4}, \eq{22} is immediate. 
\end{proof}

The following statement is a direct consequence of Lemma~\ref{lem3} for
$n=N(N-1)/2$ and $\beta$ replaced by $\beta N^{-1/2}$.
  
\begin{theorem}\label{thm5} Assume the environment $\xi$ satisfies the
dependence structure of Theorem~\ref{thm4} with $\eta=\lito(N^{1/2})$ and
$\sigma_{ij}=0$ for $i\neq j$. Then
\be
  \frac{1}{N}\IE\log Z_N(\beta,\xi) \to p_\infty(\beta).
\ee
\end{theorem}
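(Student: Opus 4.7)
The plan is to derive Theorem~\ref{thm5} as a direct application of Lemma~\ref{lem2}, combined with the Talagrand--Parisi limit for the classical SK model with standard Gaussian couplings recalled just before Lemma~\ref{lem2}. The first step is to rewrite the SK partition function in the form $\~Z_n$ used there. Take $n=N(N-1)/2$, index the coordinates of $\xi$ by pairs $\{(i,j):1\leq i<j\leq N\}$, draw $\sigma$ uniform on $\{-1,1\}^N$ independently of $\xi$, and set $Y_{ij}=\sigma_i\sigma_j$, which satisfies $|Y_{ij}|\leq 1$. Then
\be
  Z_N(\beta,\xi) = \sum_\sigma e^{\beta N^{-1/2}\sum_{i<j}\xi_{ij}\sigma_i\sigma_j} = 2^N\,\~Z_n\klr{\beta N^{-1/2},\xi},
\ee
so that $N^{-1}\IE\log Z_N(\beta,\xi) = \log 2 + N^{-1}\IE\log\~Z_n(\beta N^{-1/2},\xi)$.

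Next, let $g\sim\MVN_n(0,\Sigma)$ with $\Sigma$ the covariance matrix of $\xi$. Under the hypothesis $\sigma_{ij}=0$ for $i\neq j$, combined with the standardisation $\sigma_{ii}=1$ implicit in Lemma~\ref{lem2}, $\Sigma$ is the identity and $g$ has i.i.d.\ standard Gaussian entries. Applying Lemma~\ref{lem2} with $\beta$ replaced by $\beta N^{-1/2}$ yields
\be
  \babs{\IE\log\~Z_n(\beta N^{-1/2},\xi)-\IE\log\~Z_n(\beta N^{-1/2},g)} \leq \frac{5\beta^3\-\tau^3 n\eta}{N^{3/2}} = \bigo(N^{1/2}\eta),
\ee
since $n\sim N^2/2$. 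Dividing by $N$ and invoking $\eta=\lito(N^{1/2})$, this error tends to zero.

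Since $g$ is a vector of i.i.d.\ standard Gaussians, $N^{-1}\IE\log Z_N(\beta,g)$ is the classical SK free energy with Gaussian couplings, which by Talagrand's theorem converges to $p_\infty(\beta)$. Combining the two convergences gives the desired $N^{-1}\IE\log Z_N(\beta,\xi)\to p_\infty(\beta)$. The whole argument is essentially a one-line corollary once the identification $Z_N(\beta,\xi)=2^N\~Z_n(\beta N^{-1/2},\xi)$ is in place; there is no real obstacle beyond keeping track of the scaling, since the bound of Lemma~\ref{lem2} contributes a factor $(\beta N^{-1/2})^3\cdot n\sim\beta^3 N^{1/2}/2$, which after division by $N$ lands exactly on the critical order $\eta N^{-1/2}$ that the hypothesis $\eta=\lito(N^{1/2})$ is designed to defeat.
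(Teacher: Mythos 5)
Your proof is correct and follows exactly the paper's route: the paper states Theorem~\ref{thm5} as a direct consequence of Lemma~\ref{lem2} with $n=N(N-1)/2$ and $\beta$ replaced by $\beta N^{-1/2}$, and your argument merely makes explicit the identity $Z_N(\beta,\xi)=2^N\~Z_n(\beta N^{-1/2},\xi)$ (via averaging $Y_{ij}=\sigma_i\sigma_j$ over uniform $\sigma$), the reduction of $g$ to i.i.d.\ standard Gaussians under $\sigma_{ij}=0$, and the bookkeeping that turns the bound $5(\beta N^{-1/2})^3\-\tau^3 n\eta$ into $\bigo(\eta N^{-1/2})$ after dividing by $N$, which vanishes precisely under $\eta=\lito(N^{1/2})$.
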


Consider a fixed $m$-regular graph $G$ on the set of
vertices
$V_N=\{(i,j)\,:\,1\leq i<j\leq N\}$. Let $h_{ij}$ be i.i.d.\ centred random
variables with finite third moments. Let 
\be
  \xi_{ij} = \prod_{(k,l)\sim(i,j)} h_{kl}.
\ee
Then it is straightforward to see that these $\xi_{ij}$ are centred and
uncorrelated (note that $\xi_{ij}$ does not contain $h_{ij}$).
Clearly, from \eq{21}, $\eta\leq 2(m+1)^2$ and hence  we can apply
Theorem~\ref{thm5} as long as 
$m= \lito(N^{1/4})$. Noticing that \eq{22} is independent of the underlying
graph, we obtain the following.

\begin{corollary} Let $G_N$ be a sequence of random $m_N$-regular graphs on
$V_N$, where $m_N=\lito(N^{1/4})$. Then, with $\xi$ as
above,
\be
  \frac{1}{N}\IE^{G_N}\log Z_N(\beta,\xi) \to p_\infty(\beta)
\ee
almost surely.
\end{corollary}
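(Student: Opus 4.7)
The plan is to apply Theorem~\ref{thm5} conditionally on each realisation of $G_N$. The essential observation, already anticipated in the discussion preceding the corollary, is that the bound driving Theorem~\ref{thm5} depends on $G_N$ only through the common degree $m_N$ (via $\eta$) and not through any finer combinatorial feature, so a pointwise (in $G_N$) statement immediately upgrades to an almost sure one with no probabilistic input on the graph beyond its degree.

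First I would fix a realisation of $G_N$ and verify the hypotheses of Theorem~\ref{thm5} for the conditional law of $\xi$ given this graph. Centring of the $\xi_{ij}$ is immediate from $\IE h_{kl}=0$ and independence of the factors, while pairwise uncorrelatedness follows (as noted in the paragraph preceding the corollary) from the fact that for any two distinct edges $(i,j)\neq (i',j')$ in $V_N$ the product $\xi_{ij}\xi_{i'j'}$ always contains at least one $h$-factor to an odd power, the key point being $h_{ij}\notin\xi_{ij}$. The local dependence structure required by Theorem~\ref{thm4} is then exhibited by setting $A_{(i,j)}$ to be the closed $G_N$-neighbourhood of $(i,j)$, and $B_{(i,j),(k,l)}$ the union of the closed neighbourhoods of $(i,j)$ and $(k,l)$: these have sizes at most $1+m_N$ and $1+2m_N$ respectively, and $\xi_{A^c_{(i,j)}}$ is built from $h$-variables whose indices are disjoint from those defining $\xi_{ij}$, hence independent of $\xi_{ij}$. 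Inequality~\eqref{23} then gives $\eta\leq 2(m_N+1)^2 = \lito(N^{1/2})$.

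With the hypotheses verified, Theorem~\ref{thm5} applies to the conditional law of $\xi$ given $G_N$ and yields $N^{-1}\IE^{G_N}\log Z_N(\beta,\xi)\to p_\infty(\beta)$. The crucial point is that the error in this convergence is controlled by the estimate in Lemma~\ref{lem2} after setting $n=N(N-1)/2$ and replacing $\beta$ by $\beta N^{-1/2}$, and that this estimate is a deterministic function of $m_N$, $\-\tau$ and $\beta$ alone. Since $m_N=\lito(N^{1/4})$ holds for every realisation of $G_N$, this bound tends to zero along every sample path, and the almost sure convergence follows.

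The only real subtlety is the bookkeeping at the $h$-level needed to confirm that the collection of $h$-variables entering $\xi_{A^c_{(i,j)}}$ is genuinely disjoint from the one entering $\xi_{ij}$, and similarly for the sets $B_{(i,j),(k,l)}$; this is what makes the local dependence of $\xi$ inherit its degree parameter directly from $G_N$ and is the only reason the whole argument reduces to an unconditional invocation of Theorem~\ref{thm5}.
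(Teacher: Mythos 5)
Your overall strategy matches the paper's compressed one-liner: bound $\eta$ via (23), feed it into Theorem~\ref{thm5}, and observe that the resulting bound (24) is a deterministic function of $m_N$ and so holds along every realisation of $G_N$, whence the almost-sure statement. The problem lies in your explicit verification of the local dependence hypothesis, which contains a genuine error. You set $A_{(i,j)}$ to be the closed $G_N$-neighbourhood of $(i,j)$ and claim that $\xi_{A^c_{(i,j)}}$ involves only $h$-variables disjoint from those defining $\xi_{(i,j)}$. This is false: $\xi_{(i,j)}$ is a function of $\klg{h_w : w\sim_{G_N}(i,j)}$, and any $(i',j')$ at $G_N$-distance exactly~$2$ from $(i,j)$ shares a common $G_N$-neighbour $w$, so $\xi_{(i',j')}$ also contains the factor $h_w$ even though $(i',j')\in A^c_{(i,j)}$. (Already for the $5$-cycle $v_1v_2v_3v_4v_5$ with $m=2$: $\xi_{v_1}=h_{v_2}h_{v_5}$ and $\xi_{v_3}=h_{v_2}h_{v_4}$ share $h_{v_2}$, while $v_3\notin\klg{v_1,v_2,v_5}$.) Thus $G_N$ is \emph{not} a dependency graph for $\xi$ in the sense of (23); the correct dependency graph joins $(i,j)$ and $(i',j')$ whenever their $G_N$-neighbourhoods intersect, and its maximal degree is at most $m_N(m_N-1)$, which is of order $m_N^2$, not $m_N$.

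Carrying this through (23) gives $\eta=\bigo(m_N^4)$ rather than $\bigo(m_N^2)$, so the condition $\eta=\lito(N^{1/2})$ of Theorem~\ref{thm5} yields $m_N=\lito(N^{1/8})$, not the $\lito(N^{1/4})$ claimed. The paper's own argument is too terse to exhibit the sets $A_i$, but its invocation of ``$\eta\leq 2(m+1)^2$'' for this construction appears to suffer from the same slip; you have made it visible, which is valuable, but the claimed exponent does not follow. A secondary point worth flagging: you treat $\-\tau$ as a constant, yet $\IE\abs{\xi_{ij}}^3=(\IE\abs{h}^3)^{m_N}$, and under the normalisation $\IE h^2=1$ required by Lemma~\ref{lem2} this stays bounded as $m_N\toinf$ only if $\IE\abs{h}^3=1$, i.e.\ $\abs{h}\equiv 1$; ``finite third moments'' alone does not control $\-\tau$ along the sequence.
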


\subsection{Last passage percolation for thin rectangles}
\label{sec7}

The following statements about smooth approximation of the maximum function is
well-known (and easy to verify).

\begin{lemma}\label{lem4} Let $m$ be a positive
integer.
For each $y\in \IR^m$, let $f_0(y) = \max\{y_1,\dots,y_m\}$ and
$f_\eps(y) = \eps \log{\sum_{i}e^{y_i/\eps}}$. Then
\be
  0\leq f_\eps(y)-f_0(y)\leq \eps\log m.
\ee
\end{lemma}

Consider functions $y^{(p)}:\IR^n\to\IR$, $p=1,\dots,m$, and let
\ben									\label{23}
  P_x = \max_{1\leq p\leq m}y^{(p)}(x).
\ee

The following theorem is similar to a result obtained by \cite{Chatterjee2005a},
but now includes~$\eta$. To keep the bounds simple we make the stronger
assumption that the functions $y^{(p)}$ are linear, which what we will
need subsequently.

\begin{theorem}\label{thm6} Let $P_x$ be as above with  linear
functions~$y^{(p)}$. Let $X$ be a family of $n$ centred random variables with
finite third moments satisfying the dependence structure as in
Theorem~\ref{thm4}. Let $g:\IR\to\IR$ be three times differentiable. Then, for
$Z\sim\MVN_n(0,\Sigma)$,
\be
  \babs{\IE g(P_X)-\IE g(P_Z)} 
    \leq \bklr{6\norm{g'}_\infty+6\norm{g''}_\infty+\norm{g'''}_\infty}
n^{1/3}\eta^{1/3}\-\tau\gamma_1\log(m)^{2/3}.
\ee
where $\gamma_1 = \sup_{1\leq p\leq m}\abs{y^{(p)}}_1$.
\end{theorem}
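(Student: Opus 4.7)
The plan is to follow the smooth-max strategy of \cite{Chatterjee2005a} and then apply Theorem~\ref{thm4} to a suitably regularised version of $P_x$. First, I would invoke Lemma~\ref{lem3} (after rescaling) to replace the max with the smooth surrogate
\be
  F_\eps(x) = \eps \log \sum_{p=1}^m e^{y^{(p)}(x)/\eps},
\ee
so that $0 \leq F_\eps(x)-P_x \leq \eps \log m$ pointwise. Consequently, for any $g$ with bounded first derivative,
\be
  \babs{\IE g(P_X)-\IE g(F_\eps(X))} \leq \norm{g'}\eps\log m,
\ee
and similarly for $Z$, which gives an overall approximation error $2\norm{g'}\eps\log m$.

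Next I would apply Theorem~\ref{thm4} to $h(x)=g(F_\eps(x))$. Since the $y^{(p)}$ are linear, $\gamma_2=\gamma_3=0$, so Remark~\ref{rem3} with $\beta=1/\eps$ gives
\be
  \abs{F_\eps}_1\leq \gamma_1,\qquad
  \abs{F_\eps}_2\leq 2\gamma_1^2/\eps,\qquad
  \abs{F_\eps}_3\leq 6\gamma_1^3/\eps^2.
\ee
Feeding these into the chain-rule estimate in Remark~\ref{rem2} yields
\be
  \abs{h}_3 \leq \frac{6\gamma_1^3}{\eps^2}\norm{g'}+\frac{6\gamma_1^3}{\eps}\norm{g''}+\gamma_1^3\norm{g'''},
\ee
and Theorem~\ref{thm4} then bounds $\abs{\IE h(X)-\IE h(Z)}$ by $\tfrac{5}{6}\-\tau^3 n\eta\,\abs{h}_3$.

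Combining the approximation error and the Stein estimate, I get
\be
  \babs{\IE g(P_X)-\IE g(P_Z)} \leq 2\eps\log m\,\norm{g'}+\frac{5\,n\eta\,\-\tau^3\gamma_1^3}{\eps^2}\norm{g'}+\frac{5\,n\eta\,\-\tau^3\gamma_1^3}{\eps}\norm{g''}+\frac{5\,n\eta\,\-\tau^3\gamma_1^3}{6}\norm{g'''}.
\ee
The final step is to optimise in $\eps$: balancing the first two $\norm{g'}$ terms leads to the choice $\eps \sim (n\eta)^{1/3}\-\tau\gamma_1(\log m)^{-1/3}$, for which both $\norm{g'}$ terms collapse to the claimed order $n^{1/3}\eta^{1/3}\-\tau\gamma_1(\log m)^{2/3}$. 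Plugging the same $\eps$ into the $\norm{g''}$ and $\norm{g'''}$ terms, they are of order $(n\eta)^{2/3}\-\tau^2\gamma_1^2(\log m)^{1/3}$ and $n\eta\,\-\tau^3\gamma_1^3$ respectively, which are dominated by $n^{1/3}\eta^{1/3}\-\tau\gamma_1(\log m)^{2/3}$ precisely in the non-trivial regime $n\eta\,\-\tau^3\gamma_1^3\lesssim \log m$ (outside of which the stated bound is uninformative anyway). Tracking constants carefully should reproduce the coefficients $6,6,1$ stated in the theorem.

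The only place requiring care is this last optimisation step: the three derivative-of-$g$ terms scale differently in $\eps$, so one must verify that a single choice of $\eps$ yields a bound of the compact form in the statement. This mirrors the smart-path/smooth-max optimisation in \cite{Chatterjee2005a}, with the additional $\eta$ factor entering through Theorem~\ref{thm4} in exactly the same way as $n$.
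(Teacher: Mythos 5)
Your proposal takes essentially the same route as the paper: replace the max by the smooth surrogate via Lemma~\ref{lem3}, bound the derivatives of $h_\eps=g\circ F_\eps$ via Remarks~\ref{rem2}--\ref{rem3} (with $\gamma_2=\gamma_3=0$), apply Theorem~\ref{thm4}, and then optimise in $\eps$. The only cosmetic difference is that the paper first collects the three derivative terms under a common factor $\eps^{-2}$ (writing $\abs{h_\eps}_3\leq\eps^{-2}\gamma_1^3(6\norm{g'}+6\norm{g''}+\norm{g'''})$, which silently assumes $\eps\leq1$) and then optimises, whereas you keep the $\eps^{-2},\eps^{-1},\eps^{0}$ scalings separate and check afterwards that a single $\eps$ works in the non-trivial regime $n\eta\-\tau^3\gamma_1^3\lesssim\log m$; both routes use the same implicit restriction, and your choice $\eps\sim(n\eta)^{1/3}\-\tau\gamma_1\log(m)^{-1/3}$ is in fact the correct one (the paper's displayed $\eps$ is missing the factor $n^{1/3}$, evidently a typo).
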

\begin{proof}Using the notation of Lemma~\ref{lem4}, define the functions
\be
  h_0(x) = g(f_0(x)) ,
  \qquad
  h_\eps(x) = g(f_\eps(x)).
\ee
Clearly
\be
  \abs{h_0(x)-h_\eps(x)}\leq \norm{g'}_\infty\eps\log m.
\ee
We now use Remark~\ref{rem3}. We clearly have $\gamma_2 =
\gamma_3 = 0$.
Furthermore, using again Lemma~\ref{lem4}, it is easy to check that,
\bes
  \abs{h_\eps}_3 & \leq 
\abs{f_\eps}_3\norm{g'}_\infty+3\abs{f_\eps}_1\abs{f_\eps}_2
    \norm{g''}_\infty+\abs{ f_\eps}_1^3\norm {g''' }_\infty\\
    & \leq\eps^{-2}\gamma_1^3\bklr{6\norm{g'}_\infty+6\norm{g''}_\infty
      +\norm{g'''}_\infty}.
\ee
Thus, using Theorem~\ref{thm4},
\bes
  &\abs{\IE h_0(X) - \IE h_0(Z)} \\
      &\qquad \leq \norm{g'}_\infty\eps\log m + \abs{\IE h_\eps(X) - \IE
h_\eps(Z)}\\
      &\qquad \leq \norm{g'}_\infty\eps\log m 
	 + C\eps^{-2}\-\tau^3n\eta\gamma_1^3
	\bklr{6\norm{g'}_\infty+6\norm{g''}_\infty+\norm{g'''}_\infty}.
\ee
Choosing $\eps = n^{1/3}\eta^{1/3}\log(m)^{-1/3}\-\tau\gamma_1$, we obtain the final
bound.
\end{proof}

Let us apply this result to last passage percolation on thin rectangles along
the lines of \cite{Suidan2006}. Denote by $\pi$ an increasing path from $(1,1)$
to $(N,k)$ on the usual two dimensional lattice, where without loss of
generality $k\leq N$.
Let
\be
	y^{(\pi)}(x)
          = \frac{k^{1/6}}{N^{1/2}}\bbbklr{\sum_{i\in\pi }x_i-2\sqrt{Nk}}
\ee
and let $P_x$ be as in \eq{23}, where the maximum ranges over all increasing
paths~$\pi$. Hence, $P_x$ is the (standardized) longest increasing path between
$(1,1)$ and $(N,k)$, where each lattice point $(i,j)$ contributes $x_{ij}$ to
the length of the path. If $X$ is an i.i.d.\ family of geometric or exponential
random variables, then \cite{Johansson2000} showed that the properly centred and
standardized $P_X$ will converge to $F_2$ (the Tracy-Widom distribution for
Gaussian unitary ensembles) if $k=N$. For independent $X_i$ that are neither
exponentially nor geometrically distributed, the same results is only known for
thin rectangles, that is, for $k$ being of smaller order than~$N$; see
\cite{Bodineau2005}, \cite{Baik2005} and \cite{Suidan2006}. In particular, if
$X_i$ have finite third moments, then $k = \bigo(N^{\alpha})$ for $\alpha<1/7$.
We shall expand this result to locally dependent $X$. If $\eta$ remains bounded,
we recover the same maximal order for $k$ as in the independent case. If $\eta$
grows with $N$, however, the maximal order of $k$ be will be affected.

\begin{corollary}\label{thm7} Let $X=(X_{ij})_{1\leq i\leq N, 1\leq j\leq k}$ be
a collection of $n=Nk$ random variables with mean $0$ and variance $1$,
satisfying the dependence structure of Theorem~\ref{thm4}, and let
$Z\sim\MVN_n(0,\Sigma)$. Then, for any three times differentiable function
$g:\IR \to \IR$,
\bes
  \babs{\IE g(P_X)-\IE g(P_Z)} 
    \leq \frac{C(g,\-\tau) \eta^{1/3}k^{7/6}\log(N)^{2/3}}{N^{1/6}}.
\ee
For some constant $C(g,\-\tau)$. If $\sigma_{ij}=0$ for all $i\neq j$, then the
$P_X$ will converge to $F_2$ if $\-\tau$ remains bounded and if
\be
	k = \lito\bklr{N^{1/7}\log(N)^{-4/7}\eta^{-2/7}}.
\ee
\end{corollary}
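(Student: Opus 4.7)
The plan is to apply Theorem~\ref{thm6} to the family of linear functions $y^{(\pi)}$ indexed by the increasing lattice paths $\pi$ from $(1,1)$ to $(N,k)$, and then to combine the resulting quantitative bound with the known convergence of $P_Z$ to $F_2$ in the i.i.d.\ Gaussian case.

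First I would identify the parameters entering Theorem~\ref{thm6}. Each $y^{(\pi)}$ is linear, with partial derivatives taking only the values $0$ and $k^{1/6}N^{-1/2}$, so $\gamma_1 = k^{1/6}N^{-1/2}$ (and $\gamma_2=\gamma_3=0$, though these are not needed here). The number of increasing paths is $m=\binom{N+k-2}{k-1}$, and since $k\leq N$ this yields $\log m \leq Ck\log N$. The ambient dimension is $n=Nk$. Substituting into the bound of Theorem~\ref{thm6}, the exponents of $k$ combine as $\tfrac13+\tfrac16+\tfrac23=\tfrac76$ and those of $N$ as $\tfrac13-\tfrac12=-\tfrac16$, producing
\bes
\babs{\IE g(P_X)-\IE g(P_Z)} \leq \frac{C(g,\-\tau)\,\eta^{1/3} k^{7/6}(\log N)^{2/3}}{N^{1/6}},
\ee
which is exactly the asserted inequality.

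For the convergence statement, when $\sigma_{ij}=0$ for $i\neq j$ the vector $Z$ has i.i.d.\ standard Gaussian coordinates, and for this canonical environment $P_Z \to F_2$ in distribution is known in a regime of $k$ that comfortably contains the one considered here (see \cite{Bodineau2005} and \cite{Baik2005}). The condition $k=\lito(N^{1/7}\log(N)^{-4/7}\eta^{-2/7})$ is precisely the solution of $\eta^{1/3} k^{7/6}(\log N)^{2/3}N^{-1/6}\to 0$ for $k$, so the error bound above vanishes. Therefore $\IE g(P_X)\to\IE g(F_2)$ for every three times differentiable $g$, and a routine smoothing/truncation step upgrades this to convergence in distribution.

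The main obstacle is not any deep technical step but rather the bookkeeping of the exponents of $k$, $N$, $\eta$, and $\log N$ after substitution, together with locating the Gaussian convergence result in a regime containing the one dictated by our error bound; once these are in place the conclusion is a triangle inequality.
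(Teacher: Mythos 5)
Your proof is correct and takes essentially the same route as the paper: identify $\gamma_1 = k^{1/6}N^{-1/2}$, bound $\log m$ by $Ck\log N$ (the paper uses the slightly larger count $\binom{N+k}{N}$ and the explicit estimate $\log m \leq k\log N + 2k$, but this is immaterial), and substitute into Theorem~\ref{thm6} with $n=Nk$ to collect the exponents $7/6$, $-1/6$, $1/3$, $2/3$. The paper's own proof is terser and stops after the quantitative bound, implicitly leaving the $F_2$ convergence to the reader; you make that last triangle-inequality step explicit, correctly observing that uncorrelated coordinates of a Gaussian vector with unit variances are i.i.d.\ standard normal, so the known Gaussian result for thin rectangles applies to $P_Z$, and solving $\eta^{1/3}k^{7/6}(\log N)^{2/3}N^{-1/6}\to 0$ for $k$ gives precisely the stated threshold.
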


\begin{proof}
Clearly, $\gamma_1 = k^{1/6}N^{-1/2}$. Furthermore,
\be
  m = {N+k\choose N}
  \leq \bbbklr{\frac{N}{k}}^{k}\bbbklr{\frac{N+k}{N}}^{N+k}
\ee
As
\bes
	\log(m) & = k(\log(N)-\log(k)) + (N+k)(\log(N+k)-\log(N))\\
			& \leq k\log(N)+2k
\ee
Applying Theorem~\ref{thm7} yields the final bound. 
\end{proof}

\subsection{Classic occupancy}

As mentioned in the introduction, we can obtain bounds on \eq{1} for the classic
occupancy problem. Distribute $m$ balls independently and uniformly among $n$
urns. Let $\xi_i$ be the number of balls in urn~$i$. Then,
$\xi_i\sim\Bi(m,n^{-1})$ and $\sum_i \xi_i = m$, and therefore 
\be
  X_i = \frac{\xi_i-mn^{-1}}{\sqrt{m(1-n^{-1})}}
\ee
satisfies \eq{10} and \eq{15} and in addition $\sum_i \sigma^2_i = 1$. 
 
\begin{theorem} Let $X$ be as above and let $Z\sim\MVN_n(0,\Sigma)$. Then, for
any three times partially differentiable function $h$,  
\be
    \abs{\IE h(X) - \IE h(Z)} \leq
(\abs{h}_2+19\abs{h}_3)\sqrt{\frac{n^2+4mn+6}{mn(n-1)}}.
\ee
\end{theorem}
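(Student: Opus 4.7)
The plan is to invoke the exchangeable case of Theorem~\ref{thm3} (equation~\eq{17}) and then bound each of its two contributions by the common expression $\sqrt{(n^2+4mn+6)/(mn(n-1))}$. The hypotheses are easy to verify: since $(\xi_1,\dots,\xi_n)$ is symmetric multinomial, the vector $X$ is exchangeable, and combined with $\IE X=0$, $\sum_i X_i=0$ and finite third moments this gives
\be
	\abs{\IE h(X)-\IE h(Z)}\leq\abs{h}_2\sqrt{\Var(\ssum_i X_i^2)}+19\abs{h}_3\,n\tau_1^3.
\ee

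The first main step is to compute $\Var(\ssum_i X_i^2)$ in closed form. Writing $\sum_i X_i^2=\frac{n}{m(n-1)}\bklr{\sum_i\xi_i^2-m^2/n}$ reduces the task to evaluating $\Var(\ssum_i\xi_i^2)=n\Var(\xi_1^2)+n(n-1)\Cov(\xi_1^2,\xi_2^2)$, which I would expand using the multinomial factorial-moment identities $\IE\xi_1^{(a)}=m^{(a)}/n^a$ and $\IE[\xi_1^{(a)}\xi_2^{(b)}]=m^{(a+b)}/n^{a+b}$, together with $x^2=x^{(2)}+x^{(1)}$ and $x^4=x^{(4)}+6x^{(3)}+7x^{(2)}+x^{(1)}$. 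A somewhat tedious but routine cancellation collapses the expression to
\be
	\Var(\ssum_i\xi_i^2)=\frac{2m(m-1)(n-1)}{n^2},\qquad\text{hence}\qquad\Var(\ssum_i X_i^2)=\frac{2(m-1)}{m(n-1)}.
\ee
The difference $(n^2+4mn+6)-2n(m-1)=n^2+2mn+2n+6$ is positive, so this quantity is dominated by the target $(n^2+4mn+6)/(mn(n-1))$.

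The second step bounds $n\tau_1^3$. Cauchy--Schwarz combined with $\IE X_1^2=1/n$ gives $n\IE\abs{X_1}^3\leq\sqrt{n\IE X_1^4}$, and the standard fourth central moment of a $\Bi(m,1/n)$ variable yields $\IE X_1^4=\frac{1}{m(n-1)}+\frac{3(m-2)}{mn^2}$, so
\be
	n\IE X_1^4=\frac{n^2+3(m-2)(n-1)}{mn(n-1)}\leq\frac{n^2+4mn+6}{mn(n-1)},
\ee
the last inequality holding because $mn+3m+6n\geq 0$. Inserting both estimates into the displayed inequality from \eq{17} completes the proof.

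The only real (if modest) obstacle is the variance computation: many factorial-moment terms accumulate before they collapse to the clean form $2m(m-1)(n-1)/n^2$, and the cancellations require careful bookkeeping. Everything else is elementary arithmetic.
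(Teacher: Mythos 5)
Your proof is correct and follows essentially the same route as the paper: apply the exchangeable case \eq{17} of Theorem~\ref{thm3}, compute $\Var\bklr{\sum_i X_i^2}$ via the multinomial (factorial) moment identities of Lemma~\ref{lem4}, and bound $n\tau_1^3$ by Cauchy--Schwarz together with $\IE X_1^2=1/n$ and the binomial fourth moment. Your closed form $\Var\bklr{\sum_i X_i^2}=2(m-1)/(m(n-1))$ is correct and in fact cleaner than the paper's intermediate display, whose combination $n\Var X_1^2+\tfrac{n(n-1)}{2}\Cov(X_1^2,X_2^2)$ carries a stray factor $\tfrac12$ (the number of off-diagonal pairs is $n(n-1)$, not $n(n-1)/2$); with that corrected, the paper's $\Var X_1^2$ and $\Cov(X_1^2,X_2^2)$ reduce to your expression, and the loose final bound $\sqrt{(n^2+4mn+6)/(mn(n-1))}$ absorbs the discrepancy either way.
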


\begin{proof} We can apply \eq{17}, as $X$ is exchangeable and \eq{15} is
satisfied. It is straightforward to verify that
\be
  \Var X_1^2 = \frac{n^2+2(n-1)(m-3)}{n^2m(n-1)},
  \qquad
  \Cov(X_1^2,X_2^2) = -\frac{n^2-4n-2m+6}{mn^2(n-1)^2}
\ee
(see Lemma~\ref{lem5} below), which implies
\be
  \Var \sum_i X_i^2 = n\Var X_1^2 + \frac{n(n-1)}{2}\Cov(X_1^2,X_2^2)
	 = \frac{n^2+4mn-2m-8n+6}{2mn(n-1)}.
\ee
Furthermore,
\be
  \IE\abs{X_1}^3 \leq \sqrt{\IE X_1^2\IE X_1^4}\leq \sqrt{\frac{n^2+3nm-6n-3
m+6}{mn^3(n-1)}}.
\ee
From this, the final bound follows.
\end{proof}

We record here some identities for the mixed moments of the $\xi_i$, which are
easy to verify and needed in the above calculations. \begin{lemma}\label{lem5}
Let $\xi_1$ and $\xi_2$ be the number of balls when distributing $m$ balls
uniformly and independently among $n$ urns. Then
\ba
  \IE \xi_1^2 & = \frac{m}{n}+\frac{m(m-1)}{n^2},\\
  \IE(\xi_1\xi_2) & = \frac{m(m-1)}{n^2},\\
  \IE(\xi_1^2\xi_2) & = \frac{m(m-1)}{n^2}+ \frac{m(m-1)(m-2)}{n^3},\\
  \IE(\xi_1^2\xi_2^2) & = \frac{m(m-1)}{n^2}+ 2\frac{m(m-1)(m-2)}{n^3}
	 +\frac{m(m-1)(m-2)(m-3)}{n^4}.
\ee 
\end{lemma}

\subsection{Currie-Weiss model in the high-temperature regime}

Consider the $n$-spin system $\{-1,1\}^n$ with Hamiltonian
\be
  H(\sigma) = -\frac{1}{n}\sum_{i<j}\sigma_i\sigma_j.
\ee
To each configuration $\sigma$ assign the probability
\be
	\IP(\sigma) = \frac{e^{-\beta H(\sigma)}}{Z(\beta)},
\ee
where $Z(\beta)$ is the normalising constant. This model is well-known as
Curie-Weiss model; we refer to \cite{Eichelsbacher2010} for a more detailed
discussion of relevant literature. The authors of that article prove in
particular bounds in univariate central limit theorems for the total
magnetisation of this and similar models. Here, instead, we will estimate the
error when we replace all the spins by corresponding Gaussian variables in the
high-temperature regime $\beta<1$; this, in particular, implies the central
limit theorem for the total magnetisation.  

Previous approaches using Stein's method to analyse the magnetisation of such
models make use of exchangeable pairs (\cite{Eichelsbacher2010} and
\cite{Chatterjee2010}) which typically involves resampling a spin conditional on
the other spins. It is worthwhile noting that the Stein coupling we will use
does not require any resampling and, hence, does not form an exchangeable pair. 

To avoid confusion with the notation $\sigma_i$ for the spins, we will use
$s_{ij}$ instead of $\sigma_{ij}$ to denote covariances in what follows.  

\begin{theorem} Let $X_i=n^{-1/2}\sigma_i$ and let $Z\sim\MVN_n(0,\Sigma)$,
where
$\Sigma=(s_{ij})_{1\leq i,j\leq n}$ with 
\be
	s_{ij} = \begin{cases}
		\displaystyle\frac{1}{n}+\frac{\beta}{n^2(1-\beta)},
                                  &\text{if $i=j$,}\\[3ex]
		\displaystyle\frac{\beta}{n^2(1-\beta)},&\text{if $i\neq j$.}\\
		\end{cases}
\ee
Then, for $\beta<1$,
\be
  \abs{\IE h(X) - \IE h(Z)}
  \leq  C_\beta\bbbklr{\frac{\abs{h}_1+\abs{h}_3}{n^{1/2}}+\frac{\abs{h}_2}{n}}
\ee
for some constant $C_\beta$ that only depends on $\beta$.
\end{theorem}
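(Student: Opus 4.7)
The plan is to apply Lemma~\ref{lem1} with the same ``zero-out'' construction as in Theorem~\ref{thm1}: let $I$ be uniform on $[n]$ and independent of $X$, set $X' = X'' = X^{(I)}$, $G_k = -n\delta_{kI}X_k$ and $\~D = D = -X_I e_I$. Since the Curie-Weiss spins are dependent, the triple $(X,X',G)$ is \emph{not} a Stein coupling---and this is precisely the sense in which no resampling of $\sigma_I$ is required, the perturbation being deterministic given $(X,I)$. Consequently, we invoke the extension in Remark~\ref{rem1}(3): the decomposition of $\IE h(X)-\IE h(Z)$ acquires the additional term $\tfrac12\int_0^1 t^{-1/2}\IE R_0(t)\,dt$, which quantifies the failure of the Stein identity for the Curie-Weiss measure. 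The matrix $S$ is chosen so that $\IE(S_{kl}\mid X) = s_{kl}$, using indicators on $I$ for the diagonal and on an additional auxiliary uniform index $J$ for the off-diagonal entries, by analogy with the construction in Theorem~\ref{thm3}.

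The particular formula for $s_{ij}$ is forced by Curie-Weiss self-consistency. Linearising the conditional mean $\IE(\sigma_i\mid\sigma_{-i})=\tanh(\beta m_i/n)$ about $m_i=0$ and solving for $\Cov(\sigma_i,\sigma_j)$ with $i\neq j$ yields the Neumann-series value $\beta/(n(1-\beta)) + \bigo(n^{-2})$, which rescales under $X_i=\sigma_i/\sqrt n$ to the stated $s_{ij} = \beta/(n^2(1-\beta))$.

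The substantive work is in estimating $R_0$. Conditioning on $X_{-k}$ and using the Curie-Weiss conditional law, $\IE\bklr{X_k h_k(X_t)}$ splits into $n^{-1/2}\IE\bklr{\tanh(\beta m_k/n)\,h_k(X_{t,-k})}$ plus a Gaussian-type $\sqrt{t}n^{-1}\IE h_{kk}(X_{t,-k})$ piece plus $\bigo(n^{-3/2}\abs{h}_3)$. Taylor expansion $\tanh(x)=x-\athird x^3+\bigo(x^5)$ splits the first term further: the linear part, after being iterated, matches exactly the $\sum_{k,l}s_{kl}h_{kl}$ contribution coming from the Gaussian integration by parts, and the leftover cubic remainder is controlled by the high-temperature moment bound $\IE(n^{-1/2}\sum_i\sigma_i)^{2\ell}\leq C_{\beta,\ell}$ for $\beta<1$ (a classical consequence of the Hubbard--Stratonovich representation). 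This yields an $R_0$-contribution of order $C_\beta\abs{h}_1/\sqrt n$. The remaining terms $R_1, R_2, R_3$ are handled as in the independent case: $R_2$ and $R_3$ are $\bigo(\abs{h}_3/\sqrt n)$ because $G$ and $D$ are supported on a single coordinate, and $R_1$ contributes $\bigo(\abs{h}_2/n)$ from the residual diagonal covariance mismatch after matching $\IE S = \Sigma$.

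The main obstacle is the bookkeeping in $R_0$: the linear part of $\tanh(\beta m_k/n)$ contributes $\bigo(1)$ to the error, and one must show that, after summing over $k$ and iterating, it cancels \emph{exactly} against the $\Sigma$-terms in the Gaussian integration by parts formula. This delicate matching is what pins down the precise formula for $\Sigma$, and the assumption $\beta<1$ is essential because both the Neumann series for the covariance and the moment bound on the magnetisation blow up at the critical point $\beta=1$.
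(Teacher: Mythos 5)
Your proposal diverges from the paper's proof at the crucial point: the choice of~$G$. You take the naive zero-out coupling $G_k=-n\delta_{kI}X_k$, $X'=X''=X^{(I)}$. The paper instead uses the pair $(I,K)$ with the permutation vector $X''=X^{IJKL}$ from Theorem~\ref{thm3} and, most importantly,
\be
  G_k = -n^{3/2}\delta_{kK}\bbbklr{\frac{\beta}{n(1-\beta)}+\delta_{kI}}\bklr{\sigma_I-\beta m},
\ee
where the factor $\sigma_I-\beta m$ is tuned so that, after conditioning, it becomes $\tanh(\beta m_I)-\beta m$, which is $\bigo(n^{-1}+\abs{m}^3)$. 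This makes $\IE R_0(t)=\bigo(n^{-1/2})\abs{h}_1$ \emph{without any cancellation argument}, and the factor $\frac{\beta}{n(1-\beta)}+\delta_{kI}$ is exactly what makes $\IE(G_k D_l\mid X)$ match the prescribed $s_{kl}$ for both diagonal and off-diagonal entries (after the exchangeability relabelling via $X^{IJKL}$), so that $\IE R_1(t)$ reduces to a pure $\bigo(m^2)$-term.

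With your coupling, the leading term of $R_0$ is $-n^{-1/2}\sum_i\IE\bklg{\tanh(\beta m_i)h_i(\cdot)}\approx -\beta\IE\sum_{i,j\neq i}\tfrac{X_j}{n}h_i(\cdot)$, which is $\bigo(1)\abs{h}_1$, and simultaneously the off-diagonal part of $S$ (needed to match the off-diagonal $s_{kl}$) has no counterpart in $G_k\~D_l$ (which is supported on $k=l=I$), so $R_1$ also carries an $\bigo(1)\abs{h}_2$ contribution rather than the $\bigo(n^{-1})$ you assert. You correctly diagnose that these $\bigo(1)$ terms must cancel against each other via a Neumann-series iteration that reproduces the $\beta/(1-\beta)$ structure of $\Sigma$, and you correctly observe this is ``the main obstacle'' --- but the proposal stops there. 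That iterative cancellation is precisely the hard part: each application of the approximate Stein identity introduces new remainders, and controlling the accumulated error while showing the geometric series converges to exactly the $\Sigma$-term is nontrivial and is nowhere carried out. The paper's construction is designed specifically to make this step unnecessary. As written, the proposal identifies the correct obstruction and sketches the correct heuristic for the covariance formula, but leaves the decisive estimate unproved, so there is a genuine gap.

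A smaller point: without the permuted $X''=X^{IJKL}$ (which you replace by $X^{(I)}$), the exchangeability identity \eq{22} used in the paper's bound on $R_1$ is unavailable, and it is unclear how you would replace it.
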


\begin{proof} Define
\be
	m_i = \frac{1}{n}\sum_{j\neq i}\sigma_j,
	\qquad m = \frac{1}{n}\sum_{i}\sigma_i.
\ee
We recall the estimates 
\be
	\IE \abs{m}^k\leq C_\beta n^{-k/2};
\ee
see \cite[Lemma~3.5]{Eichelsbacher2010}. Let $(I,J,K,L)$ be distributed as in
the proof of Theorem~\ref{thm3}, independent of all else. Using the notation
from~\ref{sec3} and the proof of Theorem~\ref{thm3} (with respect to
exchangeability of $X$), define the vectors
\be
	X' = X^{(I)},\qquad
	X'' = X^{IJKL}.
\ee
Set $\~D=D$ and define $G$ as
\be
	G_k = -n^{3/2}\delta_{kK}\bbbklr{\frac{\beta}{n(1-
          \beta)}+\delta_{kI}}\bklr{\sigma_I-\beta m}.
\ee
Define the matrix $S$ as 
\be
	S_{kl} = n^2\delta_{kK}\delta_{lI}s_{IK}.
\ee
Define now $f_\cdot$ as in the proof of Theorem~\ref{thm3}. Then,
\bes
  & - \IE \sum_{k} G_kf_k(X) \\
  & \qquad= n^{3/2}\IE \bbbklr{\frac{\beta}{n(1- \beta)}
            +\delta_{KI}}\bklr{\sigma_I-\beta m}f_K(X)\\
  & \qquad = n^{-1/2}\IE \sum_{i,k}\bbbklr{\frac{\beta}{n(1- \beta)}
                  +\delta_{ki}}\bklr{\sigma_i-\beta m}f_k(X)\\
  & \qquad = n^{-1/2}\IE \sum_{i,k}\bbbklr{\frac{\beta\sigma_i}{n(1- \beta)}
                  -\frac{\beta^2m}{n(1- \beta)}
                  +\delta_{ki}\sigma_i-\delta_{ki}\beta m}f_k(X)\\
  & \qquad = n^{-1/2}\IE \sum_{k}\bbbklr{\frac{\beta m }{1-\beta}
                -\frac{\beta^2m}{1- \beta}+\sigma_k-\beta m}f_k(X)\\
  & \qquad = n^{-1/2}\IE \sum_{k}\sigma_k f_k(X) =\IE \sum_{k}X_k f_k(X),
\ee
and
\bes
  & \IE \sum_{k} G_kf_k(X')\\
  & \qquad=  -n^{-3/2} \IE \bbbklr{\frac{\beta}{n(1-
          \beta)}+\delta_{KI}}\bklr{\sigma_I-\beta m}f_K(X^{(I)})\\
  & \qquad = -n^{-1/2}\IE \sum_{i,k}\bbbklr{\frac{\beta}{n(1-
          \beta)}+\delta_{ki}}\bklr{\sigma_i-\beta m}f_k(X^{(i)})\\
  & \qquad = -n^{-1/2}\IE \sum_{i,k}\bbbklr{\frac{\beta}{n(1-
          \beta)}+\delta_{ki}}\bklr{\tanh(\beta m_i)-\beta m}f_k(X^{(i)}),
\ee
where for the last equation we used that $\IE(\sigma_i|X^{(i)})=\tanh(\beta
m_i)$.
Using the estimate
\bes
  \abs{\tanh(\beta m_i) - \beta m} &\leq
    \abs{\tanh(\beta m_i) - \tanh(\beta m)} + \abs{\tanh(\beta m) - \beta m}\\
    &\leq \frac{\beta}{n} + \frac{\beta^3 \abs{m}^3}{6}.
\ee
we obtain
\be
  \bbabs{\IE \sum_{k} G_kf_k(X')}\leq
    \abs{f}_1\frac{\bklr{6+\beta^2 n
        \IE\abs{m}^3}\bklr{\beta(1+\beta)}}{6(1-\beta)n^{1/2}}
\ee
and hence
\be
  \abs{\IE R_0(t)}\leq \abs{h}_1\frac{\bklr{6+\beta^2 n \IE m^3}
              \bklr{\beta(1+\beta)}}{6(1- \beta)n^{1/2}}
              \leq \frac{C_\beta\abs{h}_1}{n^{1/2}}.
\ee
(c.f.\ Remark~\ref{rem1}). Using exchangeability of $X$ for the second equation,
the fact that $\delta_{KI}=\delta_{JL}$ and also that $s_{IK} = s_{JL}$, we
obtain 
\bes
	& \IE \sum_{k,l} (G_k D_l-S_{kl}) f_{kl}(X'')\\
	& \quad = n^2\IE\bbbklg{\bbbklr{\frac{1}{n}\bbbklr{\frac{\beta}{n(1- \beta)}+\delta_{KI}}\bklr{\sigma_I-\beta m}\sigma_I- s_{IK}}f_{KI}(X^{IJKL})}\\
	& \quad = n^2\IE\bbbklg{\bbbklr{\frac{1}{n}\bbbklr{\frac{\beta}{n(1- \beta)}+\delta_{JL}}\bklr{\sigma_J-\beta m}\sigma_J- s_{JL}}f_{KI}(X)}\\
		& \quad = \frac{1}{n}\IE\sum_{i,j}\bbbklg{\bbbklr{\frac{1}{n}\bbbklr{\frac{\beta}{n(1- \beta)}+1}\bklr{\sigma_j-\beta m}\sigma_j- s_{jj}}f_{ii}(X)}\\
		& \qquad + \frac{1}{n(n-1)}\IE\sum_{i,j,k\neq i,l\neq j}\bbbklg{\bbbklr{\frac{1}{n}\bbbklr{\frac{\beta}{n(1- \beta)}}\bklr{\sigma_j-\beta m}\sigma_j -s_{jl}}f_{ki}(X)}\\
		& \quad = \IE\bbbklg{\sum_{j}\bbbklr{\frac{1}{n}\bbbklr{\frac{\beta}{n(1- \beta)}+1}\bklr{1-\beta \sigma_j m}- s_{jj}}\sum_{i}\frac{f_{ii}(X)}{n}}\\
		& \qquad + \IE\bbbklg{\sum_{j,l\neq j}\bbbklr{\frac{1}{n}\bbbklr{\frac{\beta}{n(1- \beta)}}\bklr{1-\beta \sigma_j m} -s_{jl}}\sum_{i,k\neq i}\frac{f_{ki}(X)}{n(n-1)}}\\
		& \quad = -\IE\bbbklg{\frac{\bklr{\beta+n(1-\beta)}\beta m^2}{n(1- \beta)}\sum_{i}\frac{f_{ii}(X)}{n}
		- \frac{(n-1)\beta^2 m^2}{n(1- \beta)}\sum_{i,k\neq i}\frac{f_{ki}(X)}{n(n-1)}}.
\ee
Thus,
\be
	\abs{\IE R_1(t)}\leq C_\beta\abs{h}_2\IE m^2
	\leq \frac{C_\beta \abs{h}_2}{n}.
\ee
Now
\bes
	\IE R_2(t,u) & = \IE\sum_{k,l,m} (G_kD_l-S_{kl})D'_m f_{klm}(X+uD')\\
		&=n^{3/2}\IE\sum_{m\in\{I,J,K,L\}}\bbbkle{\frac{1}{n}
				\bbbklr{\frac{\beta}{n(1- \beta)}+\delta_{KI}}\bklr{\sigma_I-\beta m}
							\sigma_I- s_{IK}}\sigma_m \\
		&\kern20em \times f_{KIm}(X+uD').
\ee
From this, it is not difficult to see that 
\be
	\abs{\IE R_2(t,u)}\leq \frac{C_\beta \abs{h}_3}{n^{1/2}}
\ee
(recall the definition of $s_{ij}$ and note that the probability that $I=K$ is $1/n$).
Similarly,
\be
	\abs{\IE R_3(t,u)}\leq \frac{C_\beta \abs{h}_3}{n^{1/2}}.
\ee
Putting all the estimates together, yields the claim.
\end{proof}

\section*{Acknowledgements}

The author thanks the anonymous referees for helpful comments and literature
references.


\end{document}